\documentclass[11pt]{amsart}

\usepackage[margin=1.0in]{geometry}
\usepackage{amsmath}
\usepackage{amssymb}
\usepackage{amsthm}
\usepackage{enumerate}
\usepackage{xcolor}
\usepackage{tikz}
\usepackage[indent]{parskip}
\usepackage{hyperref}
\newtheorem{thm}{Theorem}[section]
\newtheorem{lem}[thm]{Lemma}
\newtheorem{cor}[thm]{Corollary}

\theoremstyle{definition}
\newtheorem{dfn}[thm]{Definition}

\newtheorem*{prb*}{Problem}

\theoremstyle{remark}

\providecommand{\al}{\alpha}

\providecommand{\emp}{\varnothing}

\providecommand{\R}{\mathbb{R}}

\providecommand{\F}{\mathbb{F}}

\providecommand{\N}{\mathbb{N}}

\providecommand{\cA}{\mathcal{A}}

\providecommand{\cH}{\mathcal{H}}

\providecommand{\cO}{\mathcal{O}}

\providecommand{\gdot}{G_t^{\textup{dot}}}

\providecommand{\hdot}{G_t^{\textup{dot}}}
\providecommand{\hdist}{G_t}

\renewcommand{\phi}{\varphi}

\providecommand{\sep}{\hspace{.25in}\text{and}\hspace{.25in}}

\DeclareMathOperator{\aff}{aff}
\newcommand{\neigh}{\widetilde{N}}

\numberwithin{equation}{section}

\title{VC-dimension of distance graphs in finite field geometries}

\author{Brian McDonald Bakas}
\address{Indianapolis, IN}
\email{brian.mcdonald.bakas@gmail.com}

\author{Moustapha Diallo}
\address{Atlanta, GA}
\email{moustapha.diallo@uga.edu}

\author{Alex McDonald}
\address{Mathematics Department, Kennesaw State University, Marietta, GA}
\email{amcdon79@kennesaw.edu}

\thanks{The third listed author is supported by an AMS-Simons
Research Enhancement Grant for Primarily Undergraduate Institution Faculty}

\begin{document}
\begin{abstract}
We study the VC-dimension of  distance graphs of large subsets of vector spaces over finite fields.  For $3\leq k\leq d$, we prove the distance graph of $E\subset \F_q^d$ has VC-dimension at least $k$ provided that $|E|>Cq^{d-\lfloor \frac{d-k}{3}\rfloor-1}$.  We also show that this exponent is sharp in the $k=d$ case.  
\end{abstract}
\maketitle

\section{Introduction}

The notion of Vapnik-Chervonenkis dimension (briefly, VC-dimension) is a fundamental concept in statistical learning theory, where it is used to measure the complexity of a learning task.  The precise definition is as follows.

\begin{dfn}
\label{VCdefinition}
Let $E$ be a set, and let $\cH\subset \{0,1\}^E$.  We say $\cH$ \textbf{shatters} a finite set $S\subset E$ if
\[
\{0,1\}^S=\{h|_S:h\in\cH\}.
\]
The \textbf{VC-dimension} of $\cH$ is the largest integer $k$ for which $\cH$ shatters some set of size $k$ (if no such $k$ exists, the VC-dimension is said to be infinite).
\end{dfn}

For a thorough introduction to the theory of VC-dimension and its connection to learning theory, see \cite{ShalevShwartz2014}.  A common variant of Definition \ref{VCdefinition} is the notion of VC-dimension of a graph, which we give here.
\begin{dfn}
\label{def: Graph VC}
Let $G$ be a (not necessarily finite) graph with vertex set $V$ and adjacency relation denoted by $x\sim y$.  The \textbf{neighborhood} and \textbf{closed neighborhood} of $x\in V$ are the sets
\[
N(x):=\{y\in V:x\sim y\}
\]
and
\[
N[x]=\{x\}\cup N(x),
\]
respectively.  The VC-dimension of the graph $G$ is the VC-dimension of the family
\[
\cH:=\{\textbf{1}_{N[x]}:x\in V\}
\]
\end{dfn}
In recent years, there has been considerable interest in obtaining bounds on the VC-dimension of graphs which arise naturally from geometric considerations.  These problems are linked to the study of finite point configurations in sets of sufficient size.  In this paper, we focus on finite field geometries, and we consider distance graphs.

\begin{dfn}
For points $x,y\in\F_q^d$, define the \textbf{distance} by
\[
\|x-y\|=\sum_{i=1}^d (x_i-y_i)^2,
\]
For $t\in\F_q$ and $d\geq 2$, define the \textbf{sphere} of radius $t$ by
\[
S_t^{d-1}=\{x\in\F_q^d:\|x\|=t\}.
\]
Finally, for $E\subset \F_q^d$ and $t\in\F_q$, the \textbf{$t$-distance graph} of $E$ is the graph with vertex set $E$ and adjacency relation defined by
\[
x\sim y \hspace{.25in}\text{if and only if} \hspace{.25in} \|x-y\|=t.
\] 
We denote this graph by $G_t(E)$.
\end{dfn}

The study of the VC-dimension of distance graphs originated with the work of Fitzpatrick, Iosevich, Wyman, and the first listed author \cite{FIMW}.  They prove that if $E\subset \F_q^2$ satisfies $|E|>Cq^{15/8}$, then the VC-dimension of $\hdist(E)$ is 3 for all $t\in\F_q\setminus\{0\}$.  We note that the VC-dimension of $\hdist(\F_q^d)$ is $d+1$, so guaranteeing VC-dimension 3 is best possible (although the exponent $15/8$ is not known to be optimal).  A similar result is obtained for dot products in three dimensions by Iosevich, Sun and the first listed author \cite{VCdot}.  There, they show that if a set $|E|\subset \F_q^3$ satisfies $|E|>Cq^{11/4}$, then the VC-dimension of the analogously defined dot product graph $\gdot(E)$ is 3 for any $t\neq 0$.  Since the VC-dimension of $\hdot(\F_q^d)$ is $d$, this is again the best possible VC-dimension one can guarantee in $d=3$ (but the exponent $11/4$ is not sharp).  The result was generalized to higher dimensions by Ascoli et. al. \cite{Smalldist}, who show that if $|E|>Cq^{s_d}$, where
\[
s_d=
\begin{cases}
7/4, & d=2 \\
7/3, & d=3 \\
d-\frac{1}{d-1}, & d>3,
\end{cases}
\]
then the VC-dimension of $\hdist(E)$ is at least $d$ for any $t\neq 0$.  In the dot product version of the problem, the same authors \cite{Smalldot} show that the VC-dimension of $\hdot(E)$ is $d$ if $|E|>Cq^{d-\frac{1}{d-1}}$.  In the continuous setting, a recent paper of Iosevich, Magyar, and the first and third listed authors \cite{IMMM25} establishes a non-trivial threshold on the Hausdorff dimension of compact $E\subset \R^d$, $d\geq 3$, to obtain VC-dimension at least 3.  See also \cite{DM25} for a generalization which replaces spheres with other Salem sets, and \cite{HMS26, MSW25, RS26} for results on  VC-dimension in other related contexts.

Our first main theorem is as follows.

\begin{thm}
\label{thm: final main}
Let $k,d\in\N$ satisfy $3\leq k\leq d$.  There exists a constant $C=C_{k,d}$ such that if $E\subset\F_q^d$ satisfies $|E|>Cq^{d-\left\lfloor \frac{d-k}{3}\right\rfloor-1}$, then $G_t(E)$ has VC-dimension at least $k$.
\end{thm}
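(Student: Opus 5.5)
The plan is to argue by induction on $j:=\lfloor \frac{d-k}{3}\rfloor$, peeling off dimensions with a projection that preserves the distance graph.

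\emph{Base case $j=0$.} Here $k\le d\le k+2$ and $|E|>Cq^{d-1}$, and the goal is to shatter a $k$-point set $\{x_1,\dots,x_k\}\subset E$. The plan is a counting argument. For each $S\subseteq\{1,\dots,k\}$ we need a point $y_S\in E$ with $\|y_S-x_i\|=t$ exactly for $i\in S$.
\begin{enumerate}[(i)]
\item For $x_1,\dots,x_k$ in general position (affinely independent, with non-degenerate Gram data), the set $\bigcap_{i\in S}(x_i+S_t^{d-1})$ is an intersection of a sphere with an affine subspace of codimension $|S|-1$. It has $q^{d-|S|}(1+o(1))$ points.
\item Removing the spheres indexed by $i\notin S$ costs only a proportion $O(1/q)$.
\item Summing $\mathbf 1_E$ over these varieties and averaging over $(x_1,\dots,x_k)\in E^k$, the main term is $|E|^{k+1}q^{-|S|}\cdot(\text{const})$.
\item The error terms come from the Fourier decay of spheres and of their intersections with subspaces, $|\widehat{S_t}(m)|\lesssim q^{-(d+1)/2}$. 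The hypothesis $|E|>Cq^{d-1}$ is what makes the main term dominate.
\end{enumerate}
The worst case is $S=\{1,\dots,k\}$ with $k=d$, where the expected number of $y$ is of order $|E|q^{-d}\cdot q^{0}$ per tuple. This case must be handled by counting tuples $(x_1,\dots,x_d,y)$ globally rather than per tuple. Degenerate tuples (affinely dependent, or spanning isotropic subspaces) are discarded, since there are $O(|E|^{k}/q)$ of them.

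\emph{Induction step.} Write $\F_q^d=W\perp H\perp L$, where $H=\operatorname{span}(e,f)$ is a hyperbolic plane ($\|e\|=\|f\|=0$, $e\cdot f=1$), $\dim L=1$, and $\dim W=d-3$. For $x=w+ae+bf+c\ell$ we have
\[
\|x-x'\|=\|w-w'\|+2(a-a')(b-b')+(c-c')^2\|\ell\|.
\]
On a slice $\{b=b_0\}$ the distance no longer depends on $a$. The map $x\mapsto x-ae$ therefore sends $E\cap\{b=b_0\}$ into a $(d-2)$-dimensional nondegenerate space while preserving $G_t$. Moreover, any shattered set in the image lifts to a shattered set in $E$: choose arbitrary preimages, since distances are unchanged. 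Pigeonholing in $b_0$ gives a slice with at least $Cq^{d-j-2}$ points. If the projection along $e$ were essentially injective, the image would have $\gtrsim q^{(d-2)-(j-1)-1}$ points. We would still have $(d-2)-k\ge 3(j-1)$, and the inductive hypothesis would finish the step. The step as written drops only two dimensions, which would even give a better exponent; the third dimension is exactly what pays for the failure of injectivity.

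\emph{Main obstacle: collisions in the projection.} If $E$ contains many full lines in the isotropic direction $e$, the image can shrink by a factor of $q$. I expect this to be the heart of the proof. The plan is to average over the choice of the isotropic vector $e$ within a fixed $3$-dimensional nondegenerate block $H\perp L$; such a block contains $\sim q$ isotropic directions.
\begin{itemize}
\item \emph{Good direction exists.} A set $E$ cannot be a union of lines in many different isotropic directions unless it is essentially the whole block. One averages the collision count $\sum_e\#\{(x,x'):x-x'\in\F_q e\}$ over $e$ and bounds it by $|E|\,q$. This produces a direction $e$, and then a slice, whose projection retains $\gtrsim q^{d-j-2}$ points.
\item \emph{Dense fallback.} If averaging fails, $E$ contains a large portion of an entire $3$-dimensional block over many base points. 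Then $E$ is dense on an affine subspace where shattering $k$ points follows directly from the base case.
\end{itemize}
Carrying out this dichotomy cleanly, with the constants $C_{k,d}$ tracked through the induction, is the step I expect to need the most care.
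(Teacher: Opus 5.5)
\textbf{The base case is not proved, and it is where nearly all the difficulty lies.} Your base case contains $k=d$, which is Corollary~\ref{cor: main k=d}. Steps (i)--(iv) do not establish it.

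\begin{enumerate}[(a)]
\item First moments for each $S$, averaged over all $k$-tuples, never produce a \emph{single} tuple carrying all $2^k$ witnesses. At $|E|\approx Cq^{d-1}$ a typical $d$-tuple has about $C/q$ candidates for $y_{[d]}$ and only $O(C)$ candidates for each $y_S$ with $|S|=d-1$. So you must restrict to tuples that are leaf sets of stars, and then show that few \emph{stars} miss some $S$. ``Counting globally'' for $S=[d]$ does not do this.
\item For the same reason, discarding $O(|E|^k/q)$ degenerate tuples is fatal. The total number of $d$-stars is only $\approx |E|^{d+1}/q^{d}\approx C|E|^{d}/q$, which is the same order.
\item For $|S|=d$ the set in (i) is a line meeting a sphere, so there is no $q^{d-|S|}(1+o(1))$ asymptotic.
\end{enumerate}

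This is exactly what the paper's argument is built to handle. It counts stars with affinely independent leaves (Lemmas~\ref{distance graph almost regular} and~\ref{lem: affine stars}). It then bounds the stars $(X,y)$ admitting a bad set $B$, meaning $\neigh(B)\subset N(X\setminus B)$, using the sphere--affine-subspace bounds of Section~2. The sets $B$ with $|\neigh(B)|\le m$ alone already force $|E|\gg q^{k-1}$. Theorem~\ref{thm: final main} is then read off from Theorem~\ref{thm: main general} with $\al_j=j$ and $r=\lfloor\frac{d-k}{3}\rfloor+2$. There is no dimension reduction anywhere.

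\textbf{The induction step fails as written.}

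Averaging over the $q+1$ isotropic directions of a fixed block $U=H\perp L$ gives only the trivial bound. The isotropic cone of $U$ through a point has $q^2$ points, so the average collision count is $\lesssim |E|q$. That guarantees only an image of size $\gtrsim |E|/q$. You cannot afford that loss: for $d-k\not\equiv 2\pmod 3$ the exponent required in dimension $d-2$ is already $d-j-2$, so the slice size $Cq^{d-j-2}$ leaves no room beyond a constant.

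The fallback also fails. Take $E=A\times U$ with $A\subset W$ arbitrary and $|A|=Cq^{d-j-4}$. Every slice, for every isotropic $e\in U$, projects onto only $|A|q=Cq^{d-j-3}$ points. Yet $E$ need not be dense on any affine subspace of dimension above $3$. A $3$-dimensional block cannot shatter $k\ge 5$ points, since $G_t(\F_q^3)$ has VC-dimension $4$.

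The step can be repaired by averaging over \emph{all} $\approx q^{d-2}$ isotropic lines of $\F_q^d$ instead of those in one block.
\begin{enumerate}[(a)]
\item By the Fourier bound for $S_0^{d-1}$, the number of pairs $x\ne x'$ in $E$ with $\|x-x'\|=0$ is $\lesssim |E|^2/q+q^{d/2}|E|$.
\item After shrinking $E$ to size $\approx Cq^{d-j-1}$, some isotropic $e$ therefore has $\lesssim |E|$ collisions. Hence $\gtrsim |E|$ lines parallel to $e$ meet $E$.
\item Some slice $\{x\cdot e=b_0\}$ contains $\gtrsim |E|/q$ of these lines. Choosing one point per line gives an induced copy of $G_t$ of a set of that size in $e^\perp/\F_q e$.
\end{enumerate}
However, the entire induction, base case included, must then be run for arbitrary non-degenerate quadratic forms. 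The space $e^\perp/\F_q e$ need not be isometric to the sum-of-squares form on $\F_q^{d-2}$.

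With that repair, the reduction $d\to d-2$ costs one factor of $q$ and needs no third dimension. The proof still rests on the unproved base case.
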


The extreme cases $k=3$ and $k=d$ deserve special emphasis, since they feature prominently in the literature on this problem.

\begin{cor}
\label{cor: main k=d}
Let $d\geq 3$.  There exists a constant $C=C_d>0$ such that, if $E\subset \F_q^d$ satisfies $|E|>Cq^{d-1}$, then $G_t(E)$ has VC-dimension at least $d$ for every $t\in\F_q\setminus \{0\}$.
\end{cor}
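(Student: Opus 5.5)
This is the special case $k=d$ of Theorem \ref{thm: final main}, so I will deduce it directly from that theorem. With $k=d$ we have $\frac{d-k}{3}=0$, so $\left\lfloor \frac{d-k}{3}\right\rfloor=0$. The exponent in Theorem \ref{thm: final main} is then $d-0-1=d-1$. The hypothesis $3\leq k\leq d$ holds exactly when $d\geq 3$, which is the assumption of the corollary.

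The proof therefore has three steps. First, fix $d\geq 3$ and set $C_d:=C_{d,d}$, the constant that Theorem \ref{thm: final main} provides for the pair $(k,d)=(d,d)$. Second, given $E\subset\F_q^d$ with $|E|>C_dq^{d-1}$, observe that this is precisely the size hypothesis of Theorem \ref{thm: final main} for $k=d$. Third, apply that theorem to conclude that $G_t(E)$ has VC-dimension at least $d$.

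The one point needing care is the quantifier on $t$. Theorem \ref{thm: final main} as stated does not make the range of $t$ explicit, while the corollary asserts the conclusion for every $t\in\F_q\setminus\{0\}$. I will read the theorem, as in the surrounding literature, as holding for all nonzero $t$ with a constant $C_{k,d}$ that does not depend on $t$ or $q$. This is the natural reading, since the underlying sphere and incidence estimates are uniform in $t\neq 0$.

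Under that reading, the same constant $C_d$ works simultaneously for every $t\neq 0$, and the corollary follows immediately. I expect no genuine obstacle: the only thing to check is that the constant is chosen before $t$, which is exactly what the uniform reading of Theorem \ref{thm: final main} guarantees.
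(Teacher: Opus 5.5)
Your proposal is correct and is exactly the paper's proof: set $k=d$ in Theorem \ref{thm: final main}, so $\lfloor (d-k)/3\rfloor=0$ and the exponent becomes $d-1$. Your reading that the constant is chosen uniformly in $t\neq 0$ also matches the paper, since the lemmas behind Theorem \ref{thm: final main} hold for every nonzero $t$ with constants independent of $t$.
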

\begin{cor}
\label{cor: main k=3}
Let $d\geq 3$.  There is a constant $C=C_d>0$ such that, if $E\subset \F_q^d$ satisfies $|E|>Cq^{\lceil\frac{2d}{3}\rceil}$, then the VC-dimension of $G_t(E)$ is at least $3$.
\end{cor}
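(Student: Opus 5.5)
The plan is to specialize Theorem \ref{thm: final main} to $k=3$ and check that its exponent matches the one in the corollary. We assume $t\in\F_q\setminus\{0\}$, as in Corollary \ref{cor: main k=d}. Since $d\geq 3$, the pair $(k,d)=(3,d)$ satisfies $3\leq k\leq d$. So Theorem \ref{thm: final main} gives a constant $C_{3,d}$ such that $|E|>C_{3,d}\,q^{d-\lfloor\frac{d-3}{3}\rfloor-1}$ forces $G_t(E)$ to have VC-dimension at least $3$. We take $C_d:=C_{3,d}$.

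It remains to show that
\[
d-\left\lfloor \tfrac{d-3}{3}\right\rfloor-1=\left\lceil \tfrac{2d}{3}\right\rceil .
\]
Because $\frac{d-3}{3}=\frac{d}{3}-1$ and the floor commutes with integer shifts, $\lfloor\frac{d-3}{3}\rfloor=\lfloor\frac d3\rfloor-1$. Hence the left-hand side equals $d-\lfloor\frac d3\rfloor$. Using $-\lfloor x\rfloor=\lceil -x\rceil$ and the fact that $d$ is an integer, this becomes $\lceil d-\frac d3\rceil=\lceil\frac{2d}{3}\rceil$.

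As a sanity check, the residues $d\equiv 0,1,2 \pmod 3$ give $2d/3$, $(2d+1)/3$ and $(2d+2)/3$ respectively, which agree on both sides.

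With the exponents equal, the hypothesis $|E|>C_d\,q^{\lceil 2d/3\rceil}$ is exactly the hypothesis of Theorem \ref{thm: final main} with $k=3$, and the conclusion follows. There is no real obstacle here; the only point needing care is the floor/ceiling identity, and the residue check settles it.
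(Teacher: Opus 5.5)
Your proposal is correct and follows the paper's own proof: specialize Theorem \ref{thm: final main} to $k=3$ and use $\lfloor \frac{d-3}{3}\rfloor=\lfloor \frac{d}{3}\rfloor-1$ to get $d-\lfloor \frac{d}{3}\rfloor=\lceil \frac{2d}{3}\rceil$. Your justification of that last equality via $-\lfloor x\rfloor=\lceil -x\rceil$, with the residue check, spells out a step the paper states without comment, and your restriction to $t\neq 0$ matches the paper's standing assumption.
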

We will also show (Theorem \ref{sharpness of VC threshold}) that the exponent $d-1$ is sharp for the conclusion in Corollary \ref{cor: main k=d}.  We prove Theorem \ref{thm: final main} as a consequence of a more general theorem which is interesting in its own right.  In order to motivate this theorem, we start by outlining the strategy of our proof and establishing some graph-theoretic notation.
\begin{dfn}
For $k\in\N$, let $[k]=\{1,\dots,k\}$.  The \textbf{$k$-shattering graph}, denoted $S_k$, is any of the (isomorphic) bipartite graphs with parts $X_k=\{x_1,\dots,x_k\}$ and $Y_k=\{y_I:I\subset [k]\}$, where $x_i\sim y_I$ if and only if $i\in I$.  
\end{dfn}

The graphs $S_k$ with $k=1,2,3$ are shown in Figure \ref{shattering}.
\begin{figure}
\centering
\begin{minipage}{.33\textwidth}
  \centering
\begin{tikzpicture}
\draw(0,0)--(0,1);
\draw[fill](0,0)circle[radius=0.1];
\draw[fill, blue](0,1)circle[radius=0.1];
\draw[fill](1,0)circle[radius=0.1];
\end{tikzpicture} 
\end{minipage}%
\begin{minipage}{.33\textwidth}
  \centering
\begin{tikzpicture}
\draw(-1.4,0)--(-.7,.7)--(0,0)--(.7,.7)--(1.4,0);
\draw[fill](0,0)circle[radius=0.1];
\draw[fill](-1.4,0)circle[radius=0.1];
\draw[fill](1.4,0)circle[radius=0.1];
\draw[fill, blue](.7,.7)circle[radius=0.1];
\draw[fill, blue](-.7,.7)circle[radius=0.1];
\draw[fill](0,-.7)circle[radius=0.1];
\end{tikzpicture}
\end{minipage}
\begin{minipage}{.33\textwidth}
  \centering
\begin{tikzpicture}
\draw(0,0)--(.7,.7);
\draw(0,0)--(0,1);
\draw(0,0)--(-.7,.7);
\draw(.7,.7)--(.7,1.7)--(0,1);
\draw(.7,.7)--(0,1.4)--(-.7,.7);
\draw(-.7,.7)--(-.7,1.7)--(0,1);
\draw(-.7,.7)--(-1.4,0);
\draw(.7,.7)--(1.4,0);
\draw(0,1)--(.7,.3);

\draw[fill](0,0)circle[radius=0.1];

\draw[fill](.7,1.7)circle[radius=0.1];
\draw[fill](-.7,1.7)circle[radius=0.1];
\draw[fill](0,1.4)circle[radius=0.1];

\draw[fill](-1.4,0)circle[radius=0.1];
\draw[fill](1.4,0)circle[radius=0.1];
\draw[fill](.7,.3)circle[radius=0.1];

\draw[fill, blue](.7,.7)circle[radius=0.1];
\draw[fill, blue](-.7,.7)circle[radius=0.1];
\draw[fill, blue](0,1)circle[radius=0.1];

\draw[fill](0,-.5)circle[radius=0.1];
\end{tikzpicture}
\end{minipage}
\caption{The graphs $S_1,S_2,S_3$ ($X_k$ blue, $Y_k$ black).}
  \label{shattering}
\end{figure}
With these definitions, we see that the VC-dimension of a graph $G$ is at least $k$ if\footnote{This ``if'' cannot be replaced by ``if and only if'' because the definition of VC-dimension in a graph does not require the sets $X_k$ and $Y_k$ to be disjoint.  Thus, we (and much of the literature on VC-dimension of distance graphs) actually prove a slightly stronger statement then the claimed VC-dimension bounds.} $G$ contains $S_k$ as an induced subgraph.  In particular, each $S_k$ contains a copy of $K_{k,1}$ consisting of the $k$ points being shattered and the point $y_{[k]}$ which is adjacent to all of them.  We will be interested in counting this type of subgraph in the distance graph $G_t(E)$.

\begin{dfn}
Fix $E\subset \F_q^d$ and $t\in\F_q\setminus \{0\}$.  A \textbf{star} is a copy of $K_{k,1}$ in $G_t(E)$, which we will denote by $(X,y)$, where $X\in \binom{E}{k}$ is the $k$-part and $y\in E$ is the $1$-part.
\end{dfn}

To prove our theorem, we start by estimating the number of $k$-stars in $E$.  We will think of such a star as a building block for a copy of the shattering graph $S_k$, where we take $X$ as the $d$ points being shattered and $y$ as our point $y_{[d]}$.  We then count the number of stars which cannot be used to build $S_k$, and show that this number is less than the total number of stars.

In order to construct an induced copy of $S_k$ in a distance graph, one obstacle which must be dealt with is the possibility that a set $E$ is large enough to determine many stars, but for some $j<k$ there is a large number of configurations $(x_1,\dots,x_j)$ such that only one point $y$ satisfies the equations 
\begin{equation}
\label{eq: j spheres intersect}
\|x_1-y\|=\cdots=\|x_j-y\|=t.
\end{equation}
This would not contradict our count on the number of stars, but would prevent us from shattering the points $x_1,\dots,x_k$ because there would be no candidate for a vertex $y_{[j]}$.  More generally, the set of solutions $y$ to (\ref{eq: j spheres intersect}) is the intersection of a sphere and an affine subspace of dimension $j-1$.  If $E$ intersects only a lower dimensional subspace of this solution space, then there are fewer candidates for $y_{[j]}$ then would be expected generically.  If we assume that this does not happen too often, our proof method becomes more flexible.  We establish two more pieces of notation before stating our most general theorem.
\begin{dfn}
If $G$ is a graph and $x$ is a vertex, recall that $N(x)$ denotes the neighborhood of $x$.  If $S$ is a set of vertices, we define
\[
N(S)=\bigcup_{x\in S}N(x)
\]
and
\[
\neigh(S)=\bigcap_{x\in S}N(x).
\]
\end{dfn}
When $E\subset \F_q^d$ and $t\in\F_q$ are fixed, we understand all graph-theoretic notation to refer to the graph $G_t(E)$.  
\begin{dfn}
For a set $E\subset \F_q^d$, let $\aff(E)$ denote the maximum number of affinely independent points in $E$ (equivalently, $\aff(E)-1$ is the dimension of the affine span of $E$).
\end{dfn}
\begin{thm}
\label{thm: main general}
Let $k,r,d,m\in\N$ satisfy $3\leq k\leq d,1\leq m\leq d+1,2\leq r\leq \frac{d+2}{2}$, and for each $1\leq j<k$ let $\al_j\in [0,j]$.  There exists a constant $C$ (depending on $k,r,d,m,\max_j \al_j,\min_j\al_j$) such that the following holds.  Define
\begin{equation}
\label{eq: define s}
s=\max\left(\frac{d+1}{2},d-r+1,\max_{1\leq j<k}\frac{d+j+r-3}{2+j-\al_j},k-1\right).
\end{equation}
Let $E\subset\F_q^d$ satisfy
\begin{equation}
\label{eq: define alpha}
    \left|\left\{B\in \binom{E}{j}:1<\aff(\neigh(B))<r \textup{ and } |\neigh(B)|>m \right\}\right|\lesssim |E|^{\al_j}.
\end{equation}
If $|E|>Cq^{s}$, then $G_t(E)$ has VC-dimension at least $k$.
\end{thm}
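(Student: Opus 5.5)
We build an induced copy of $S_k$ in $G_t(E)$ in the way sketched above. First we count $k$-stars. Then we remove those stars which cannot be completed to $S_k$, and we show that the number removed is smaller than the total.

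\textbf{Counting stars.} The number of $k$-stars $(X,y)$ is $\sum_{y\in E}\binom{|N(y)|}{k}$. Since $\sum_y |N(y)| = |\{(x,y)\in E^2:\|x-y\|=t\}|$, the standard Fourier estimate for spheres gives
\[
\sum_y|N(y)| = \frac{|E|^2}{q}+O\bigl(q^{\frac{d-1}{2}}|E|\bigr).
\]
This is $\gtrsim |E|^2/q$ once $|E|>Cq^{\frac{d+1}{2}}$, which is the first term in the maximum \eqref{eq: define s}. By convexity (Jensen), the number of $k$-stars is then $\gtrsim |E|\,(|E|/q)^k$. Here the condition $|E|\gtrsim q^{k-1}$ guarantees $|E|/q\gtrsim k$, so the binomial coefficients do not degenerate. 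For a star $(X,y)$ with $X=\{x_1,\dots,x_k\}$, we try to complete it to $S_k$. We need, for every $I\subsetneq[k]$, a point $y_I\in E$ with $y_I\in\neigh(\{x_i:i\in I\})$ and $y_I\notin N(x_i)$ for all $i\notin I$. We also need all the chosen points to be distinct, and $X$ must not be adjacent to itself in a way that breaks inducedness. Call the star \emph{bad} if some $I$ admits no valid $y_I$.

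\textbf{Bounding bad stars.} Fix $I$ with $|I|=j<k$ and set $B=\{x_i:i\in I\}$. If $y_I$ fails to exist, then every point of $\neigh(B)$ is adjacent to some $x_i$ with $i\notin I$. The candidates lie in $\neigh(B)$, which is the intersection of $E$ with a sphere inside a $(d-j)$-dimensional affine subspace. Each extra constraint $\|x_i-z\|=t$ cuts $\neigh(B)$ down to a hyperplane section of it. So $\neigh(B)\subset \bigcup_{i\notin I}\bigl(\neigh(B)\cap N(x_i)\bigr)$, and each piece lies in a codimension-one affine slice of $\aff(\neigh(B))$. We split into three cases.

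\emph{(a) Large affine dimension.} Suppose $\aff(\neigh(B))\ge r$. We count pairs $(B,\{x_i\}_{i\notin I},y)$ in which the whole common neighbourhood is covered by $k-j$ spheres. Using a second-moment (Fourier/Cauchy–Schwarz) bound on the number of points of $E$ on a generic sphere–flat intersection, the covering forces $|E|\lesssim q^{d-r+1}$ for most such configurations. This is where the term $d-r+1$ in $s$ comes from, and it also needs $r\le\frac{d+2}{2}$ so that the relevant sphere intersections are non-degenerate.

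\emph{(b) Small neighbourhood.} Suppose $|\neigh(B)|\le m$. Then $y$ together with $B$ already pins the configuration down up to $O_m(1)$ choices. We bound the number of such stars by $\lesssim |E|^{j}\cdot m\cdot (|E|/q)^{k-j}$, with a loss that can be absorbed in the constant $C$.

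\emph{(c) Exceptional $B$.} The remaining sets $B$ have $1<\aff(\neigh(B))<r$ and $|\neigh(B)|>m$. By hypothesis \eqref{eq: define alpha} there are at most $|E|^{\al_j}$ of them. Each extends to at most $|N(y)|^{k-j}$ stars for each $y\in\neigh(B)$. Summing over $y$ with the incidence bound, the total is roughly $|E|^{\al_j}\cdot q^{r-1}\cdot (|E|/q)^{k-j}\cdot q^{\,\cdot}$. Comparing this with $|E|(|E|/q)^k$ gives exactly the condition $|E|^{2+j-\al_j}\gtrsim q^{d+j+r-3}$, which is the third term of $s$.

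Finally, a union bound over the $2^k$ choices of $I$, together with discarding the degenerate stars (repeated points, or $x_i\sim x_{i'}$, both of which contribute lower-order counts), leaves at least one good star when $|E|>Cq^s$. That good star yields an induced $S_k$, hence VC-dimension at least $k$.

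\textbf{Main obstacle.} The hardest step is (a). We must show quantitatively that when $\neigh(B)$ spans a high-dimensional flat, it cannot be covered by $k-j$ hyperplane sections for too many stars. I would first try a Fourier bound on $|E\cap(\text{sphere}\cap\text{flat})|$ averaged over $B$. I expect the exact form of the exponents $d-r+1$ and $\frac{d+j+r-3}{2+j-\al_j}$ to fall out of choosing the right pigeonholing there.
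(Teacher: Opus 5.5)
Your architecture is the paper's: count $k$-stars, call $B\subset X$ bad when $\neigh(B)\subset N(X\setminus B)$, and split into $\aff(\neigh(B))\ge r$, $1<\aff(\neigh(B))<r$ with $|\neigh(B)|>m$, and $|\neigh(B)|\le m$. However, the two cases that produce the nontrivial terms of $s$ are not proved.

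\textbf{Case (a).} You state the desired outcome (``the covering forces $|E|\lesssim q^{d-r+1}$ for most configurations'') but give no mechanism. The missing input in the paper is algebraic rather than Fourier-analytic. The paper argues that for a bad $B$ some $x_i$ with $i\notin I$ lies in $\neigh(\neigh(B))$, i.e.\ at distance $t$ from every point of $\neigh(B)$. Since $\neigh(B)$ contains $r$ affinely independent points, Theorem \ref{thm: few spheres intersection} leaves only $O(q^{d-r})$ choices for that $x_i$; this is exactly where $r\le\frac{d+2}{2}$ is used. On a degree-regularized set this gives $M(B)\lesssim|\neigh(B)|\,q^{d-r}(|E|/q)^{k-j-1}$. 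The identity $\sum_{|B|=j}|\neigh(B)|=\sum_y\binom{\deg y}{j}\approx|E|^{j+1}/q^j$ then makes the total $|E|^kq^{d-k-r+1}$. This is small against $|E|^{k+1}/q^k$ precisely when $|E|\gg q^{d-r+1}$.

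Be aware that the paper asserts this single-vertex reduction without comment. It is immediate when $|X\setminus B|=1$. For $|X\setminus B|\ge 2$, badness only gives $\neigh(B)\subset\bigcup_{i\notin I}N(x_i)$, which is exactly the covering you flagged as hard. So that step needs its own justification if you follow this route.

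\textbf{Case (c).} Your count contains an unspecified factor $q^{\,\cdot}$, and the match with $\frac{d+j+r-3}{2+j-\al_j}$ is asserted, not computed. The bound you actually describe (any $y\in\neigh(B)$, any $k-j$ further neighbours of $y$) is $|E|^{\al_j}q^{r-2}(|E|/q)^{k-j}$. Here $q^{r-2}$ rather than $q^{r-1}$, since $\aff(\neigh(B))\le r-1$ puts $\neigh(B)$ in a flat of dimension at most $r-2$. Beating $|E|^{k+1}/q^k$ with this bound requires $|E|^{1+j-\al_j}\gg q^{j+r-2}$, which is not implied by $|E|>Cq^s$. For example, take $k=d\ge 4$, $r=3$, $\al_j=j$: then $s=d-\frac12$, but $j=d-1$ would need $|E|\gg q^{d}$.

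The paper's exponent comes from again forcing one remaining vertex into $\neigh(\neigh(B))$. Two distinct points of $\neigh(B)$ already cut its choices to $O(q^{d-2})$, giving $|E|^{\al_j}q^{d+r-4}(|E|/q)^{k-j-1}$. In the example above $j=k-1$, so that reduction is automatic there.

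\textbf{Smaller points.} First, your bad-star bounds replace $|N(y)|$ by $|E|/q$. That needs an upper bound on degrees, i.e.\ passing to the almost-regular subset of Lemma \ref{distance graph almost regular}; Jensen only supplies the lower bound. Second, the term $k-1$ in $s$ is not about nondegenerate binomial coefficients. It is the genuine cost of case (b): $|E|^{j}m(|E|/q)^{k-j}\ll|E|^{k+1}/q^k$ holds only when $|E|\gg q^{j}$. So it is a real threshold, not a loss absorbed into $C$.
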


\section{Intersections of affine subspaces and spheres in $\F_q^d$}
Throughout this section, it will be convenient to adopt a notational convention in order to distinguish between tuples of vectors and the coordinates of individual vectors.  We will use superscripts to denote vectors in a sequence and subscipts to denote coordinates.  Thus, we will write things like $v^1,\dots,v^n$ for a sequence of $n$ vectors $v^i$, and $v_j$ for the $j$th coordinate of a given vector $v$.
\begin{dfn}
Let $V$ be a finite dimensional vector space over a field $\F$.  A \textbf{bilinear form} on $V$ is a map $B:V\times V\to \F$ which is linear in each variable separately.  The \textbf{kernel} of the form is the set
\[
\ker B:=\{v\in V:(\forall w\in V)\ B(v,w)=0\}.
\]
A bilinear form is \textbf{degenerate} if $\ker B\neq \{0\}$.  The form is \textbf{symmetric} if $B(v,w)=B(w,v)$ for all $v,w\in V$.  For a symmetric bilinear form, we say $v,w\in V$ are \textbf{orthogonal} if $B(v,w)=0$, and write
\[
W^\perp=\{v\in V:(\forall w\in W)\ B(v,w)=0\}
\]
for the orthogonal subspace with respect to $B$.
\end{dfn}
We use the following standard facts about the structure of symmetric bilinear forms (see, for example, \cite[Chapter XV]{Lang}).
\begin{thm}[Basic properties of symmetric bilinear forms]
\label{thm: bilinear properties}
Let $\F$ be a field of characteristic not equal to $2$, let $V$ be a vector space over $\F$ of finite positive dimension, and let $B$ be a symmetric bilinear form on $V$.  
\begin{enumerate}[(a)]
    \item There exists a subspace $W\subset V$ such that $B$ restricts to a non-degenerate form on $W$, and $V=W\oplus \ker B$.
    \item $V$ has an orthogonal basis with respect to $B$.
    \item If $B$ is non-degenerate, then for any subspace $W\subset V$ we have $\dim W+\dim W^\perp=\dim V$.
    \item For any subspace $W\subset V$, we have $V=W+W^\perp$ if and only if $B$ restricts to a non-degenerate form on $W$.
\end{enumerate}
\end{thm}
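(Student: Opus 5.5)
The statement just given is Theorem \ref{thm: bilinear properties}, the standard structure theory of symmetric bilinear forms over a field of characteristic not $2$. I will prove the four parts in the order (a), (b), (c), (d), each building on the previous ones.

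\textbf{Part (a).} Choose any complement $W$ of $\ker B$ in $V$, so that $V=W\oplus\ker B$. Suppose $w\in W$ satisfies $B(w,w')=0$ for all $w'\in W$. Since $w$ also pairs to zero with $\ker B$, it is orthogonal to all of $V$. Hence $w\in\ker B\cap W=\{0\}$, so $B|_W$ is non-degenerate.

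\textbf{Part (b).} I will argue by induction on $\dim V$.
\begin{enumerate}[(i)]
    \item If $B\equiv 0$, every basis is orthogonal.
    \item Otherwise there is some $v$ with $B(v,v)\neq 0$. Indeed, pick $u,w$ with $B(u,w)\neq 0$ and use the polarization identity $2B(u,w)=B(u+w,u+w)-B(u,u)-B(w,w)$. This is the one place the characteristic assumption is used.
    \item For such $v$, the map $x\mapsto x-\frac{B(x,v)}{B(v,v)}v$ shows $V=\langle v\rangle\oplus v^\perp$.
    \item Apply induction to $v^\perp$ and adjoin $v$ to its orthogonal basis.
\end{enumerate}

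\textbf{Part (c).} Consider the map $\phi:V\to V^*$ given by $v\mapsto B(v,\cdot)$. It is injective because $B$ is non-degenerate, hence an isomorphism since $\dim V=\dim V^*$. Under $\phi$, the subspace $W^\perp$ is the preimage of the annihilator $W^0\subset V^*$. Since $\dim W^0=\dim V-\dim W$, we get $\dim W^\perp=\dim V-\dim W$.

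\textbf{Part (d), forward direction.} Assume $B|_W$ is non-degenerate. Apply (c) to the space $W$ with the form $B|_W$: restricting $\phi$ gives an isomorphism $W\to W^*$. Now take $v\in V$. The functional $B(v,\cdot)|_W$ equals $B(w,\cdot)|_W$ for a unique $w\in W$. Then $v-w\in W^\perp$, so $v=w+(v-w)\in W+W^\perp$.

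\textbf{Part (d), converse.} This is the step I expect to be the obstacle, because it is false without an extra hypothesis. Suppose $V=W+W^\perp$, and let $w\in W\cap W^\perp$. Then $w$ is orthogonal to $W$ and to $W^\perp$, hence to all of $V$, so $w\in\ker B$. This shows $W\cap W^\perp\subseteq\ker B$. To conclude that $W\cap W^\perp=\{0\}$, which is exactly non-degeneracy of $B|_W$, one needs $B$ itself non-degenerate (or more generally $W\cap\ker B=\{0\}$).

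Counterexample without that hypothesis: take $B\equiv 0$ and $W=V$. Then $W+W^\perp=V$, but $B|_W$ is degenerate.

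So I would state (d) with $B$ assumed non-degenerate, or with the hypothesis $W\cap \ker B=0$. Under that assumption the converse follows from the kernel argument above. Applications later in the paper presumably use the standard form $\sum_i v_iw_i$ on $\F_q^d$, which is non-degenerate, so this restriction is harmless there.
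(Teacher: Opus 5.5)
Your proposal is correct. The paper gives no proof of this theorem; it only cites Lang, Chapter XV. Your arguments for (a), (b), (c) and the ``if'' direction of (d) are the standard ones: take a complement of $\ker B$; find an anisotropic vector by polarization and induct on $v^\perp$; use the isomorphism $V\to V^*$ together with annihilators; and represent $B(v,\cdot)|_W$ by an element of $W$.

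Your counterexample to the ``only if'' direction of (d) is valid. For $B\equiv 0$ and $W=V\neq\{0\}$ we have $W^\perp=V$, so $V=W+W^\perp$ holds while $B|_W$ is degenerate. Thus (d) as printed is false when $B$ is degenerate. Besides your repairs (assume $B$ non-degenerate, or assume $W\cap\ker B=\{0\}$), you can replace $+$ by $\oplus$. That version holds for every symmetric $B$, because $W\cap W^\perp$ is exactly the kernel of $B|_W$.

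None of this affects the paper. Its only use of (d) is the ``if'' direction, in the proof of Theorem \ref{thm: gen sphere intersect V}(a), applied to the non-degenerate dot product on $\F_q^d$.
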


\begin{dfn}
Let $\F$ be a field, $d\geq 1$.  A \textbf{quadratic form} over $\F^d$ is a function $Q:\F^d\to \F$ of the form
\[
Q(x)=\sum_{1\leq i,j\leq d}a_{i,j}x_ix_j
\]
for some coefficients $a_{i,j}\in \F$.  A subspace $V\subset \F^d$ is called \textbf{isotropic} (with respect to $Q$) if there exists $x\in V\setminus\{0\}$ such that $Q(x)=0$, and is called \textbf{totally isotropic} if $Q(x)=0$ for all $x\in V$.
\end{dfn}

\begin{thm}[Orthogonality and totally isotropic subspaces]
\label{thm: totally isotropic properties}
Let $\F$ be a field of characteristic not equal to 2, let $B$ be a non-degenerate symmetric bilinear form on $\F^d$, and let $Q$ be the associated quadradic form (that is, let $Q(x)=B(x,x)$).
\begin{enumerate}[(a)]
\item If $V\subset \F^d$ is a totally isotropic subspace, then $V\subset V^\perp$.
\item If $V\subset \F^d$ is totally isotropic, then $\dim V\leq \frac{d}{2}$.
\item If $V\subset \F^d$ is a totally isotropic subspace and $\dim V=\frac{d}{2}$, then $V=V^\perp$.
\end{enumerate}
\end{thm}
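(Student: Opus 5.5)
We prove the three parts in order, using only Theorem \ref{thm: bilinear properties} and the relation $Q(x)=B(x,x)$. Since the characteristic is not $2$, we have the polarization identity
\[
B(x,y)=\tfrac12\bigl(Q(x+y)-Q(x)-Q(y)\bigr).
\]

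For part (a), let $V$ be totally isotropic and take $v,w\in V$. Then $v+w\in V$, so $Q(v)=Q(w)=Q(v+w)=0$. Polarization gives $B(v,w)=0$. Since $w\in V$ was arbitrary, $v\in V^\perp$, and hence $V\subset V^\perp$.

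For part (b), $B$ is non-degenerate on $\F^d$, so Theorem \ref{thm: bilinear properties}(c) gives
\[
\dim V+\dim V^\perp=d.
\]
By part (a), $\dim V\le \dim V^\perp$. Therefore $2\dim V\le d$, that is, $\dim V\le d/2$.

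For part (c), suppose $\dim V=d/2$. The same dimension identity gives $\dim V^\perp=d-d/2=d/2=\dim V$. Together with the inclusion $V\subset V^\perp$ from part (a), equal finite dimensions force $V=V^\perp$.

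The only point that needs care is the step from $Q$ vanishing on $V$ to $B$ vanishing on $V\times V$. This uses that $2$ is invertible, which is exactly the hypothesis on the characteristic. Everything after that is dimension counting via Theorem \ref{thm: bilinear properties}(c), which applies because $B$ is assumed non-degenerate on all of $\F^d$.
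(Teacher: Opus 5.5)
Your proof is correct. Parts (b) and (c) are exactly the paper's argument: combine $V\subset V^\perp$ with $\dim V+\dim V^\perp=d$ from Theorem \ref{thm: bilinear properties}(c).

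The only difference is in part (a). The paper takes an orthogonal basis $v^1,\dots,v^k$ of $V$ for the restricted form, which exists by Theorem \ref{thm: bilinear properties}(b). It then notes that every $B(v^i,v^j)$ vanishes: off-diagonal entries by orthogonality, and diagonal ones because $B(v^i,v^i)=Q(v^i)=0$. Bilinearity then gives $B(v,w)=0$ on $V\times V$. You get the same conclusion in one line from polarization.

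Both routes need the characteristic to differ from $2$. You use it visibly, to divide by $2$. The paper uses it only implicitly, through the existence of an orthogonal basis, which can fail in characteristic $2$ and is usually proved via polarization anyway. So your version is more elementary and self-contained, while the paper's reuses the structure theorem it already cited.

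Incidentally, the paper's proof says \emph{orthonormal} basis, where only \emph{orthogonal} is possible since every vector of $V$ has $Q=0$. Your argument avoids that slip entirely.
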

\begin{proof}
Let $\{v^1,\dots,v^k\}$ be an orthonormal basis for $V$.  For any $v\in V$, write $v=\sum_i a_iv^i$.  For any other sequence of coordinates $\{b_i\}$, we have
\[
B\left(v, \sum_{j=1}^k b_iv^i\right)=\sum_{i,j}a_ib_jB(v^i, v^j).
\]
Since the basis is orthogonal, $B(v^i,v^j)=0$ whenever $i\neq j$.  Since $V$ is totally isotropic, we have $B(v^i,v^i)=0$ also.  It follows that $v\cdot w=0$ for all $w\in V$, hence $v\in V^\perp$.  This proves (a).  Parts (b) and (c) follows from (a) and the relation  $\dim V+\dim V^\perp=d$ given in Theorem \ref{thm: bilinear properties}.
\end{proof}

We will be interested in the case $\F=\F_q$ ($q$ odd).  Here, the length $\|\cdot\|$ is a quadratic form, and the corresponding bilinear form is the dot product.  If $V\subset \F_q^d$ is a subspace we will refer to $V$ as ``isotropic'' (respectively, ``totally isotropic'') if the restriction of $\|\cdot \|$ to $V$ is isotropic (respectively, totally isotropic) and ``degenerate'' if the restriction of the dot product to $V\times V$ is degenerate.  We have the following bound.
\begin{lem}
\label{lem: non-degen sphere}
Let $d\in\N$, and let $Q$ be a non-degenerate quadratic form on $\F_q^d$.  For any $t\in\F_q$, we have
\[
|\{x\in V:Q(x)=t\}|\leq 2q^{d-1}.
\]
\end{lem}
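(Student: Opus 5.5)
Presumably the set is meant to be $\{x\in\F_q^d : Q(x)=t\}$, written as $\{x\in V:\dots\}$ with $V=\F_q^d$. The plan is to diagonalize $Q$ and induct on $d$.

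\textbf{Diagonalization.} By Theorem \ref{thm: bilinear properties}(b), $\F_q^d$ has an orthogonal basis for the bilinear form $B(x,y)=\frac12\big(Q(x+y)-Q(x)-Q(y)\big)$ associated to $Q$ (recall $q$ is odd). Since $Q$ is non-degenerate, every basis vector $v$ has $B(v,v)\neq 0$. After this linear change of coordinates, which is a bijection and so preserves the count, we may write
\[
Q(x)=a_1x_1^2+\cdots+a_dx_d^2, \qquad a_i\neq 0 .
\]

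\textbf{Base case $d=1$.} The equation $a_1x_1^2=t$ has at most $2=2q^{0}$ solutions, since a nonzero polynomial of degree $2$ over a field has at most two roots.

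\textbf{Inductive step $d\geq 2$.} Fix the first $d-1$ coordinates $(x_1,\dots,x_{d-1})$ arbitrarily; there are $q^{d-1}$ choices. The remaining equation
\[
a_dx_d^2 = t-\sum_{i<d}a_ix_i^2
\]
is a nonzero quadratic in $x_d$, because $a_d\neq 0$. It therefore has at most $2$ solutions, and the total count is at most $2q^{d-1}$.

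This last step shows that the induction in the previous paragraph is not actually needed: fixing all but the last coordinate already gives the bound directly.

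There is no real obstacle here. The only point needing care is that non-degeneracy guarantees $a_d\neq 0$ in the diagonal form; without it, the final equation could be identically satisfied and the count could be as large as $q^d$.
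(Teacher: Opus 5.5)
Your proof is correct, and you are right that the $V$ in the statement should read $\F_q^d$. The paper gives no argument of its own here. It calls the bound standard and cites an exact formula for the number of solutions of $Q(x)=t$ in \cite[Theorem 7.1]{BHIPR}, from which $2q^{d-1}$ is read off.

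Your route is different: take an orthogonal basis via Theorem~\ref{thm: bilinear properties}(b), then fix all but one coordinate, so that a nonzero one-variable quadratic leaves at most two choices. This is elementary and self-contained, but it yields only the upper bound. The exact formula carries more information, namely the lower-order terms and hence asymptotics such as $|S_t^{d-1}|=q^{d-1}(1+o(1))$, which the paper uses in Lemma~\ref{distance graph almost regular}.

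Your argument also shows that non-degeneracy is stronger than needed. All you actually use is that \emph{some} diagonal coefficient $a_i$ is nonzero, i.e.\ that $Q\not\equiv 0$. So your closing remark overstates the role of the hypothesis. For a degenerate but nonzero $Q$ you would simply solve for a coordinate with $a_i\neq 0$ instead of $x_d$. The count can exceed $2q^{d-1}$ only when $Q\equiv 0$ (and $t=0$).

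A consequence is that your argument, applied directly to the restriction of $\|\cdot\|$ to a non-totally-isotropic subspace, proves Theorem~\ref{thm: sphere intersect V} in one step. You would not need to separate off the isotropic basis vectors and then invoke this lemma on the non-degenerate part, as the paper does.
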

This is a standard result; see \cite[Theorem 7.1]{BHIPR} for an exact formula for the left hand side.

\begin{thm}
\label{thm: sphere intersect V}
Let $V\subset \F_q^d$ be a subspace of dimension $k\geq 1$ which is not totally isotropic.  Then, for any $t\in\F_q$, we have
\[
|V\cap S_t^{d-1}|\leq 2q^{k-1}.
\]
\end{thm}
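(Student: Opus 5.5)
The plan is to reduce to Lemma \ref{lem: non-degen sphere} by splitting $V$ into its radical and a non-degenerate complement. Apply Theorem \ref{thm: bilinear properties}(a) to the restriction of the dot product to $V$. This gives $V=W\oplus K$, where $K=\ker(\cdot|_{V\times V})$ is the radical and the dot product restricted to $W$ is non-degenerate. Set $\dim K=r$ and $\dim W=k-r$.

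Since $V$ is not totally isotropic, $W\neq\{0\}$. Indeed, if $W=\{0\}$ then $V=K$, and every $v\in V$ would satisfy $\|v\|=v\cdot v=0$, contradicting the hypothesis. Hence $\dim W\geq 1$.

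Now write each $x\in V$ uniquely as $x=w+u$ with $w\in W$ and $u\in K$. Because $u$ is orthogonal to all of $V$, we have $u\cdot w=0$ and $u\cdot u=0$. Therefore
\[
\|x\|=\|w\|+2\,w\cdot u+\|u\|=\|w\|.
\]
It follows that
\[
|V\cap S_t^{d-1}|=|K|\cdot|\{w\in W:\|w\|=t\}|=q^{r}\,|\{w\in W:\|w\|=t\}|.
\]

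To bound the last factor, choose a basis of $W$ and identify $W$ with $\F_q^{k-r}$. Under this identification, $\|\cdot\|$ restricted to $W$ becomes a quadratic form $Q$ whose associated bilinear form is the restricted dot product, which is non-degenerate. Lemma \ref{lem: non-degen sphere} applies because $k-r\geq1$, and gives
\[
|\{w\in W:\|w\|=t\}|\leq 2q^{k-r-1}.
\]
Multiplying by $q^{r}$ yields the claimed bound $2q^{k-1}$.

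The only real point to check is that the non-degeneracy hypothesis of Lemma \ref{lem: non-degen sphere} survives the change of coordinates. This holds because the Gram matrix of the restricted form in the chosen basis is invertible exactly when the form is non-degenerate. The case $K=\{0\}$ needs no separate treatment: it is the same argument with $r=0$.
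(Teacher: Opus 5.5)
Your proof is correct and is essentially the paper's argument. The paper takes a dot-product-orthogonal basis of $V$ (part (b) of Theorem \ref{thm: bilinear properties}) and separates the null basis vectors, which span exactly your radical $K$, from the remaining ones, which span a non-degenerate complement; it then obtains $q^{k-l}\cdot 2q^{l-1}$ exactly as you obtain $q^{r}\cdot 2q^{k-r-1}$. Your use of part (a) instead is only a coordinate-free repackaging of the same splitting, and it is the very decomposition the paper itself uses in the degenerate case of Theorem \ref{thm: gen sphere intersect V}.
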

\begin{proof}
By Theorem \ref{thm: bilinear properties}, we may fix a basis $\{v^1,\dots,v^k\}$ for $V$ which is orthogonal with respect to the dot product.  Without loss of generality, suppose we have ordered our basis so that $\|v^i\|=0$ if and only if $i>l$.  Because $V$ is not totally isotropic, we have $1\leq l\leq k$.  Define a quadratic form $Q$ on $\F_q^{l}$ by
\[
Q(x)=\|x_1v^1+\cdots+x_{l}v^{l}\|=\sum_{i=1}^{l} \|v^i\|x_i^2.
\]
The associated bilinear form is 
\[
B(x,y)=\sum_{i=1}^{l}\|v^i\|x_iy_i.
\]
If $e^i$ denotes the $i$-th standard basis vector, then $B(x,e^i)=\|v^i\|x_i$.  This implies that $B$ is non-degenerate, since $\|v^j\|\neq 0$ for $1\leq j\leq l$.  Finally, we note that for any $v=x_1v^1+\cdots+x_kv^k\in V$, we have $\|v\|=Q(x_1,\dots,x_{l})$.  Thus, coordinates $(x_1,\dots,x_k)\in \F_q^k$ represent a point on $S_t^{d-1}$ if and only if the first $l$ coordinates $x:=(x_1,\dots,x_{l})$ satify $Q(x)=t$; coordinates $(x_{l+1},\dots,x_k)$ can be arbitrary.  By Lemma \ref{lem: non-degen sphere}, we have
\[
|V\cap S_t^{d-1}|=q^{k-l}|\{x\in \F_q^{l}:Q(x)=t\}| \leq 2q^{k-1 }.
\]
\end{proof}
\begin{thm}
\label{thm: gen sphere intersect V}
Let $V\subset \F_q^d$ be a subspace of dimension $k\geq 1$.  Let $S$ be any sphere (that is, let $S=z+S_t^{d-1}$ for some $z\in\F_q^d,t\in\F_q$).  
\begin{enumerate}[(a)]
    \item If $V$ is is not totally isotropic with respect to the length $\|\cdot\|$, then
    \[
    |S\cap V|\leq 2q^{k-1}.
    \]
    \item If $V$ is totally isotropic and $2k=d$, and the radius of $S$ is non-zero, then
    \[
    |S\cap V|\leq q^{k-1}.
    \]
\end{enumerate}
\end{thm}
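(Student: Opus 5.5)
We argue directly in coordinates adapted to $V$, in the same spirit as the proof of Theorem \ref{thm: sphere intersect V}. The new feature is the translation by $z$, which introduces a linear term. Write $S=z+S_t^{d-1}$, so that $v\in S\cap V$ if and only if $v\in V$ and
\[
\|v\|-2\,v\cdot z+\|z\|=t .
\]

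For part (a), fix by Theorem \ref{thm: bilinear properties} a dot-product-orthogonal basis $v^1,\dots,v^k$ of $V$, ordered so that $\|v^i\|\neq 0$ exactly for $i\leq l$. Since $V$ is not totally isotropic, $1\leq l\leq k$. Writing $v=\sum_i x_iv^i$, set $c_i=\|v^i\|$ and $b_i=v^i\cdot z$. The defining equation becomes
\[
\sum_{i=1}^{l}c_ix_i^2-2\sum_{i=1}^k b_ix_i=t-\|z\| .
\]
We split into two cases.
\begin{enumerate}[(i)]
\item Suppose $b_i\neq 0$ for some $i>l$. Then $x_i$ appears only linearly, and with nonzero coefficient. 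So for each choice of the other $k-1$ coordinates there is exactly one solution, giving at most $q^{k-1}$ points.
\item Suppose $b_i=0$ for all $i>l$. Since $q$ is odd and each $c_i$ with $i\leq l$ is nonzero, we can complete the square in each $x_i$, $i\leq l$. The equation becomes $Q(x+w)=t'$, where $Q(x)=\sum_{i\leq l}c_ix_i^2$ is the non-degenerate form on $\F_q^l$ from the proof of Theorem \ref{thm: sphere intersect V}, $w\in\F_q^l$ is a fixed shift, and $t'\in\F_q$. The coordinates $x_{l+1},\dots,x_k$ are free. Translating $x$ by $w$ does not change the number of solutions, so Lemma \ref{lem: non-degen sphere} gives at most $2q^{l-1}\cdot q^{k-l}=2q^{k-1}$ points.
\end{enumerate}

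For part (b), note that the dot product is non-degenerate on $\F_q^d$. By Theorem \ref{thm: totally isotropic properties}(c), $V=V^\perp$, and $\|v\|=0$ for every $v\in V$. The equation therefore becomes linear in $v$:
\[
v\cdot z=\tfrac12(\|z\|-t).
\]
Again there are two cases.
\begin{enumerate}[(i)]
\item If $z\notin V^\perp=V$, then $v\mapsto v\cdot z$ is a nonzero linear functional on $V$. The solution set is then either empty or an affine hyperplane of $V$, so it has at most $q^{k-1}$ points.
\item If $z\in V$, then $v\cdot z=0$ for all $v\in V$ and $\|z\|=0$ by total isotropy. The equation reduces to $0=-t/2$, which has no solutions because $t\neq 0$.
\end{enumerate}

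The main point requiring care is case (ii) of part (a). The completion of the square is legitimate only because the linear terms live solely on the non-isotropic coordinates, and case (i) is exactly what handles the alternative. Beyond that the argument is routine.
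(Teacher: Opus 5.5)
Your proof is correct, and part (b) matches the paper's argument. Part (a) reaches the same bound by a different route.

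The paper argues coordinate-free in two stages.
\begin{enumerate}[(1)]
\item For non-degenerate $V$, it uses $\F_q^d=V+V^\perp$ (Theorem~\ref{thm: bilinear properties}(d)) to write $z=v+v^\perp$. It then notes that $\|x-z\|=\|x-v\|+\|v^\perp\|$ for $x\in V$, which reduces the count to the centered case of Theorem~\ref{thm: sphere intersect V}.
\item For degenerate $V=W\oplus V_0$, it fibers over the radical $V_0$. Since $\|w+u-z\|=\|w-z\|-2u\cdot z$, each $u\in V_0$ only shifts the radius. Summing the bound $2q^{m-1}$ over the $q^{k-m}$ fibers gives $2q^{k-1}$.
\end{enumerate}

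Your orthogonal-basis computation does both stages at once. Completing the square in $x_1,\dots,x_l$ is the coordinate form of projecting $z$ onto the non-degenerate part $\mathrm{span}(v^1,\dots,v^l)$. You also need only the existence of an orthogonal basis, not part (d).

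Your case split is not actually needed, and your remark that completing the square is legitimate only in case (ii) is too cautious. You can always complete the square in $x_1,\dots,x_l$ and move the terms $-2b_ix_i$ with $i>l$ to the right-hand side. Then $t'$ depends on $(x_{l+1},\dots,x_k)$, and each fiber still contributes at most $2q^{l-1}$; this is exactly the paper's fibering over $V_0$. What your split buys is the slightly sharper bound $q^{k-1}$ when $z$ is not orthogonal to the radical, which the statement does not require.
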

\begin{proof}
\textbf{(a):}  First, consider the case where $V$ is non-degenerate.  By Theorem \ref{thm: bilinear properties}, we have $\F_q^d=V+V^\perp$.  We may therefore write $z=v+v^\perp$ with $v\in V$ and $v^\perp\in V^\perp$.  We observe that $\|z\|=\|v\|+\|v^\perp\|$, and that for $x\in V$, we have $x\cdot z=x\cdot v$, hence
\begin{align*}
\|x-z\|&=\|x\|+\|z\|-2x\cdot z \\
&=\|x\|+\|v\|+\|v^\perp\|-2x\cdot v \\
&=\|x-v\|+\|v^\perp\|.
\end{align*}
It follows that
\[
V\cap (z+S_t^{d-1})=V\cap (v+S_{t-\|v^\perp\|}^{d-1}).
\]
Since $x\mapsto x-v$ is a bijection on $V$, we therefore have
\begin{align*}
|V\cap S|&=\left|V\cap (v+S_{t-\|v^\perp\|}^{d-1})\right| \\
&=\left|(V-v)\cap S_{t-\|v^\perp\|}^{d-1}\right| \\
&=\left|V\cap S_{t-\|v^\perp\|}^{d-1}\right| \\
&\leq2q^{k-1}
\end{align*}
by Theorem \ref{thm: sphere intersect V}.

Next, we consider the case where $V$ is degenerate.  By Theorem \ref{thm: bilinear properties}, we may write $V=W\oplus V_0$, where $V_0$ is the kernel of the restriction of the dot product to $V$, and $W$ is non-degenerate.  Since $V$ is not totally isotropic, $W$ is non-trival.  If $v=w+u$ for $w\in W,u\in V_0$, then
\begin{align*}
\|v-z\|&=\|v\|-2v\cdot z+\|z\| \\
&=\|w\|-2w\cdot z+\|z\|-u\cdot z \\
&=\|w-z\|-2u\cdot z.
\end{align*}
Therefore, $\|v-z\|=t$ if and only if $\|w-z\|=t+2u\cdot z$.  Let $m=\dim W$.  By the non-degenerate case, we have
\begin{align*}
|\{v\in V:\|v-z\|=t\|\}|&=\sum_{u\in V_0}|\{w\in W:\|w-z\|=t+2u\cdot z\}| \\
&\leq 2q^{m-1}|V_0| \\
&=2q^{k-1}.
\end{align*}

\textbf{(b):}  Since $V$ is totally isotropic, by Theorem \ref{thm: totally isotropic properties} we have $V\subset V^\perp$.  By Theorem \ref{thm: bilinear properties} and the assumption $\dim V=\frac{d}{2}$ we also have $\dim V=\dim V^\perp$, hence $V=V^\perp$ holds.  We are counting solutions to the equation
\begin{equation}
\label{eq: tot iso sphere I}
t=\|x\|+\|z\|-2x\cdot z,\hspace{.25in} x\in V.
\end{equation}
Since $V$ is totally isotropic, $x\in V$ implies $\|x\|=0$, hence the number of solutions to (\ref{eq: tot iso sphere I}) is bounded above by the number of solutions to the equation
\begin{equation}
\label{eq: tot iso sphere II}
x\cdot z=\frac{\|z\|-t}{2},\hspace{.25in} x\in V.
\end{equation}
If $z\in V$ then (\ref{eq: tot iso sphere II}) has no solutions, as the left hand side is 0 and the right hand side is $t/2$ (which is assumed non-zero).  If $z\notin V$, then the number of solutions to (\ref{eq: tot iso sphere II}) is the same as the number of solutions to the homogenized equation $x\cdot z=0$ with $x\in V$.  If all $x\in V$ are solutions then $z\in V^\perp$, hence $z\in V$.  Therefore, the solution set is a proper subspace, and hence contains at most $q^{k-1}$ points.
\end{proof}

If $A\subset \F_q^d$ is an affine subspace, there is a unique vector space which is a translate of $A$; denote this space by $V(A)$.  The following is immediate.
\begin{cor}
\label{cor: Sphere intersect A}
Let $A\subset \F_q^d$ be an affine subspace, let $k=\dim V(A)$, and let $S$ be a sphere.
\begin{enumerate}[(a)]
    \item If $V(A)$ is not totally isotropic, then
    \[
    |S\cap A|\leq 2q^{k-1}.
    \]
    \item If $V(A)$ is totally isotropic and $2k=d$, and the radius of $S$ is non-zero, then
    \[
    |S\cap A|\leq q^{k-1}.
    \]
\end{enumerate}
\end{cor}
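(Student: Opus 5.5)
Corollary \ref{cor: Sphere intersect A} follows from Theorem \ref{thm: gen sphere intersect V} by a single translation. Write $A=a+V(A)$ for some fixed $a\in A$, and let $S=z+S_t^{d-1}$. The map $x\mapsto x-a$ is a bijection from $A$ onto $V(A)$. Since $\|x-z\|=\|(x-a)-(z-a)\|$, it sends $S\cap A$ onto $(S-a)\cap V(A)$. Here $S-a=(z-a)+S_t^{d-1}$ is again a sphere, and it has the same radius $t$. Hence
\[
|S\cap A|=|(S-a)\cap V(A)|.
\]

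The hypotheses of the corollary concern only $V(A)$ and the radius, and translation preserves both. So in case (a), with $V(A)$ not totally isotropic, Theorem \ref{thm: gen sphere intersect V}(a) applied to the subspace $V(A)$ of dimension $k$ and the sphere $S-a$ gives the bound $2q^{k-1}$. In case (b), with $V(A)$ totally isotropic, $2k=d$, and $t\neq 0$, part (b) of the same theorem gives $q^{k-1}$.

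There is no real obstacle. The only points to check are that a translate of a sphere is a sphere of the same radius, and that $k=\dim V(A)$ is the quantity used in Theorem \ref{thm: gen sphere intersect V}. That theorem requires $k\geq 1$. If $k=0$, then $A$ is a single point, and the bound in (a) reads $|S\cap A|\leq 2q^{-1}$. This fails exactly when $A\subset S$, so $A$ should be taken to have positive dimension, as the theorem implicitly assumes.
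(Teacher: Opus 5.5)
Your proof is correct and is the paper's argument: the paper calls the corollary immediate from Theorem \ref{thm: gen sphere intersect V}, and the implicit step is exactly your translation by $a\in A$, which carries $S$ to a sphere of the same radius. Your $k=0$ caveat is unnecessary. Under the paper's definition the zero subspace is totally isotropic, since $Q(0)=0$, so the hypothesis of (a) already forces $k\geq 1$. In (b), the condition $2k=d\geq 2$ does the same.
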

Combining Corollary \ref{cor: Sphere intersect A} with the maximum size of a totally isotropic subspace (Theorem \ref{thm: totally isotropic properties}), we get the following.
\begin{cor}
\label{cor: Sphere intersect high dim affine}
Let $r\geq \frac{d+2}{2}$, let $x^1,\dots,x^r$ be affinely independent, and let $A$ be the affine span of these points.  For any sphere $S$ of non-zero radius, we have
\[
|S\cap A|\leq 2q^{r-2}.
\]
Moreover, if $r>\frac{d+2}{2}$, the assumption that the radius is non-zero may be removed.
\end{cor}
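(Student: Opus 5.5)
The plan is to reduce to Corollary \ref{cor: Sphere intersect A} by computing the dimension $k=\dim V(A)$ and checking that $V(A)$ cannot be totally isotropic. Since $x^1,\dots,x^r$ are affinely independent, $A$ is an affine subspace of dimension $k=r-1$, and $V(A)$ is the linear span of the differences $x^2-x^1,\dots,x^r-x^1$. Granting that case (a) or (b) of Corollary \ref{cor: Sphere intersect A} applies, the bound will read $|S\cap A|\leq 2q^{k-1}=2q^{r-2}$, which is exactly the claimed estimate.

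First I would treat the case $r>\frac{d+2}{2}$. Here $k=r-1>\frac{d}{2}$. Part (b) of Theorem \ref{thm: totally isotropic properties} says that a totally isotropic subspace of $\F_q^d$ (for the non-degenerate dot product) has dimension at most $\frac d2$, so $V(A)$ is not totally isotropic. Part (a) of Corollary \ref{cor: Sphere intersect A} then gives $|S\cap A|\leq 2q^{r-2}$. This uses nothing about the radius, which is why the non-zero radius hypothesis can be dropped in this case.

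Next comes the boundary case $r=\frac{d+2}{2}$, so that $k=\frac d2$ and $2k=d$. If $V(A)$ is not totally isotropic, part (a) applies as before. If $V(A)$ is totally isotropic, then $2k=d$ and the radius is assumed non-zero, so part (b) of Corollary \ref{cor: Sphere intersect A} gives $|S\cap A|\leq q^{k-1}\leq 2q^{r-2}$. This is the only place the radius hypothesis is needed.

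The argument is essentially bookkeeping, and I expect no real obstacle. The one point needing care is the boundary case $2k=d$, where totally isotropic subspaces of dimension $d/2$ can exist (for suitable $d$ and $q$); this is exactly where the non-zero radius is needed and where part (b) of the earlier corollary does the work. The statement as written says $r\geq\frac{d+2}{2}$, and values $r>d+1$ are vacuous since at most $d+1$ points of $\F_q^d$ are affinely independent.
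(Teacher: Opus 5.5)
Your proposal is correct and follows the paper's approach. The paper gives no separate proof; it only says the corollary follows by combining Corollary \ref{cor: Sphere intersect A} with the bound $\dim V\leq \frac{d}{2}$ for totally isotropic subspaces from Theorem \ref{thm: totally isotropic properties}. Your case split ($\dim V(A)=r-1>\frac{d}{2}$ rules out total isotropy so part (a) applies; $\dim V(A)=\frac{d}{2}$ falls back on part (b) with the non-zero radius) is exactly that combination written out.
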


Finally, we observe that counting intersections of spheres and affine subspaces allows us to count intersections of spheres.

\begin{thm}
\label{thm: few spheres intersection}
Let $1\leq r\leq\frac{d+2}{2}$.  If $z^1,\dots,z^r\in\F_q^d$ are affinely independent vectors, then for any $t_1,\dots,t_r\in\F_q$ not all zero, we have
\[
\left|\bigcap_{i=1}^{r} (z^i+S_{t_i}^{d-1})\right|\leq 2q^{d-r}.
\]
Moreover, if $r<\frac{d+2}{2}$, the assumption that the radii are not all zero may be removed.
\end{thm}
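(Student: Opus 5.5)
The plan is to linearize the system of sphere equations by subtracting them pairwise. A point $y$ lies in $\bigcap_{i=1}^r (z^i+S_{t_i}^{d-1})$ if and only if $\|y-z^1\|=t_1$ and, for each $2\leq i\leq r$,
\[
\|y-z^1\|-\|y-z^i\| = t_1-t_i .
\]
Expanding $\|y-z\|=\|y\|-2y\cdot z+\|z\|$, the quadratic term $\|y\|$ cancels in each difference. Each difference equation therefore becomes the affine-linear condition
\[
2y\cdot(z^i-z^1)=t_1-t_i+\|z^i\|-\|z^1\| .
\]
Since $z^1,\dots,z^r$ are affinely independent, the vectors $z^i-z^1$ ($2\leq i\leq r$) are linearly independent. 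So these $r-1$ equations cut out either the empty set (nothing to prove) or an affine subspace $A$ with $\dim V(A)=d-r+1=:k\geq 1$. The intersection in question equals $A\cap(z^1+S_{t_1}^{d-1})$, and we must show this has at most $2q^{k-1}=2q^{d-r}$ points.

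Apply Corollary \ref{cor: Sphere intersect A}. If $V(A)$ is not totally isotropic, part (a) gives the bound immediately. Suppose instead $V(A)$ is totally isotropic. Theorem \ref{thm: totally isotropic properties}(b) gives $k\leq d/2$, i.e.\ $d-r+1\leq d/2$, i.e.\ $r\geq \frac{d+2}{2}$. Under the hypothesis $r\leq\frac{d+2}{2}$ this forces $r=\frac{d+2}{2}$ and $2k=d$. In particular, if $r<\frac{d+2}{2}$ this case cannot occur, which gives the ``moreover'' statement with no assumption on the radii.

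In the remaining case $2k=d$ with $V(A)$ totally isotropic, we want part (b) of Corollary \ref{cor: Sphere intersect A}, which needs a sphere of nonzero radius. This is the main (small) obstacle: the hypothesis only says that some $t_i\neq 0$, not that $t_1\neq0$. Because the intersection is symmetric in the indices, I will reorder the points at the start so that $t_1\neq 0$ and write $A$ relative to $z^1$. Affine independence and the dimension count are unaffected by the relabeling. Part (b) then gives $|A\cap (z^1+S_{t_1}^{d-1})|\leq q^{k-1}\leq 2q^{d-r}$, which completes the argument.
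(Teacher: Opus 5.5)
Your proof is correct and follows essentially the same argument as the paper. You subtract the sphere equations to reduce to one nonzero-radius sphere meeting an affine subspace of dimension $d-r+1\geq \frac{d}{2}$, then apply Corollary \ref{cor: Sphere intersect A}(a) or (b) according to whether that subspace is totally isotropic (possible only when $r=\frac{d+2}{2}$); the only cosmetic difference is that the paper relabels so $t_r\neq 0$ and translates $z^r$ to the origin, while you keep the sphere centered at $z^1$.
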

\begin{proof}
Assume without loss of generality that $t_r\neq 0$.  We are counting solutions $x$ to the system of equations
\begin{equation}
\label{eq: sphere system 1}
\begin{split}
\|x-z^1\|&=t_1, \\
&\vdots \\
\|x-z^r\|&=t_r.
\end{split}
\end{equation}
Making the change of variables $y=x-z^r,w^i=z^i-z^r$, (\ref{eq: sphere system 1}) is equivalent to the system
\begin{equation}
\label{eq: sphere system 2}
\begin{split}
\|y-w^1\|&=t_1, \\
&\vdots \\
\|y-w^{r-1}\|&=t_{r-1}, \\
\|y\|&=t_r.
\end{split}
\end{equation}
Let $A$ be the $(r-1)\times d$ matrix with rows $w^1,\dots,w^{r-1}$, and let
\[
b=\frac{1}{2}
\begin{pmatrix}
\|w^1\|+t_r-t_1 \\
\vdots \\
\|w^{r-1}\|+t_r-t_{r-1}
\end{pmatrix}.
\]
In this notation, (\ref{eq: sphere system 2}) is equivalent to the pair of equations
\begin{align}
\label{eq: matrix}
Ay=b, \\
\label{eq: last sphere}
\|y\|=t_r.
\end{align}
The solution set of (\ref{eq: matrix}) is either empty, or else has the same number of solutions as the homogenized equation $Ax=0$.  Since the rows of $A$ are linearly independent, we have $\text{rank}\: A=r-1$.  Together with the assumption $r\leq \frac{d+2}{2}$, this implies
\[
\dim\ker A=d-(r-1)\geq \frac{d}{2}.
\]
If the inequality is strict then $\ker A$ is not totally isotropic.  If equality holds, then $\ker A$ has dimension $\frac{d}{2}$.  In either case, we may apply Corollary \ref{cor: Sphere intersect A} to the solution space of (\ref{eq: matrix}).  Since $t_r\neq 0$, we may apply either part (a) or (b) of that corollary to conclude that the number of solutions to the system (\ref{eq: matrix})-(\ref{eq: last sphere}) is at most $2q^{d-r}$.  If $r<\frac{d+2}{2}$ then $\dim\ker A>\frac{d}{2}$, hence we may apply part (a) without assuming $t_r\neq 0$, giving the ``moreover'' part of the statement.
\end{proof}

\section{Proofs}

\subsection{Lemmas}
We start with a standard Fourier-analytic estimate which is frequently useful in the study of distance graphs.
\begin{lem}[\protect{\cite[Theorem 2.1]{BCCHIP16}}]
\label{functional IRlemma}
Let $f,g:\F_q^d\to [0,\infty)$.  For any $t\in\F_q\setminus\{0\}$, we have
\[
\sum_{x,y\in \F_q^d}f(x)g(y)S_t^{d-1}(x-y)=\frac{|S_t^{d-1}|}{q^d}\|f\|_{L^1}\|g\|_{L^1}+R_t(f,g),
\]
where the remainder satisfies the bound
\[
|R_t(f,g)|\leq 2q^{\frac{d-1}{2}}\|f\|_{L^2}\|g\|_{L^2}.
\]
\end{lem}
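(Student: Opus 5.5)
The plan is to expand the indicator of the sphere in Fourier series, isolate the zero frequency as the main term, and control everything else through the decay of the sphere's Fourier transform together with Plancherel. Fix a nontrivial additive character $\chi$ of $\F_q$ and normalize
\[
\widehat{h}(m)=q^{-d}\sum_{x\in\F_q^d}h(x)\chi(-x\cdot m),\qquad h(x)=\sum_{m}\widehat{h}(m)\chi(x\cdot m).
\]
Inserting $S_t^{d-1}(x-y)=\sum_m \widehat{S_t^{d-1}}(m)\chi((x-y)\cdot m)$ into the left-hand side and summing over $x$ and $y$ gives
\[
\sum_{x,y}f(x)g(y)S_t^{d-1}(x-y)=q^{2d}\sum_{m\in\F_q^d}\widehat{S_t^{d-1}}(m)\,\overline{\widehat{f}(m)}\,\widehat{g}(m),
\]
where we use that $f,g$ are real-valued.

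\textbf{Main term.} The $m=0$ term equals $q^{2d}\cdot q^{-d}|S_t^{d-1}|\cdot q^{-d}\|f\|_{L^1}\cdot q^{-d}\|g\|_{L^1}=\frac{|S_t^{d-1}|}{q^d}\|f\|_{L^1}\|g\|_{L^1}$, using that $f,g\geq 0$. We define $R_t(f,g)$ to be the sum over $m\neq 0$.

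\textbf{Fourier decay of the sphere (the main obstacle).} The key input is the bound
\[
|\widehat{S_t^{d-1}}(m)|\leq 2q^{-\frac{d+1}{2}}\qquad (m\neq 0,\ t\neq 0).
\]
To prove it, write the indicator of the sphere via orthogonality:
\[
\widehat{S_t^{d-1}}(m)=q^{-d-1}\sum_{s\in\F_q}\sum_{x}\chi\bigl(s(\|x\|-t)\bigr)\chi(-x\cdot m).
\]
The $s=0$ term vanishes because $m\neq0$. For $s\neq 0$, complete the square in each coordinate. The $x$-sum then factors as $G(s)^d\,\chi\bigl(-\|m\|/(4s)\bigr)$, where $G(s)$ is a quadratic Gauss sum equal to $\eta(s)G$ with $|G|=\sqrt q$ and $\eta$ the quadratic character. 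What remains is
\[
q^{-d-1}G^d\sum_{s\neq0}\eta^d(s)\,\chi\!\left(-st-\frac{\|m\|}{4s}\right).
\]
This is a Kloosterman sum when $d$ is even and a Salié sum when $d$ is odd. Since $t\neq0$, Weil's bound gives absolute value at most $2\sqrt q$; when $\|m\|=0$ the sum is even smaller, at most $1$ or $\sqrt q$. Combining, $|\widehat{S_t^{d-1}}(m)|\leq q^{-d-1}q^{d/2}\,2q^{1/2}=2q^{-\frac{d+1}{2}}$. This step carries the real content, since it relies on Weil's bound, and I would quote it rather than reprove it.

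\textbf{Error term.} With the decay bound in hand, apply Cauchy--Schwarz and then Plancherel, $\sum_m|\widehat f(m)|^2=q^{-d}\|f\|_{L^2}^2$:
\[
|R_t(f,g)|\leq q^{2d}\cdot 2q^{-\frac{d+1}{2}}\Bigl(\sum_m|\widehat f(m)|^2\Bigr)^{1/2}\Bigl(\sum_m|\widehat g(m)|^2\Bigr)^{1/2}=2q^{\frac{d-1}{2}}\|f\|_{L^2}\|g\|_{L^2}.
\]
This is exactly the claimed estimate.
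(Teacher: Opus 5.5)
Your proof is correct. The paper gives no proof of its own and simply quotes the lemma from \cite{BCCHIP16}, where it is proved the way you propose: isolate the zero frequency, bound $|\widehat{S_t^{d-1}}(m)|\leq 2q^{-\frac{d+1}{2}}$ for $m\neq 0$ using Gauss sums and the Weil bound for Kloosterman/Sali\'e sums, then finish with Cauchy--Schwarz and Plancherel. Your normalizations give exactly $q^{2d}\cdot 2q^{-\frac{d+1}{2}}\cdot q^{-d}=2q^{\frac{d-1}{2}}$, and the only implicit hypothesis is that $q$ is odd, which the paper assumes throughout.
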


As a consequence of Lemma \ref{functional IRlemma}, we see that we may always assume without loss of generality that our distance graphs are almost regular.  More precisely, we have the following.

\begin{lem}
\label{distance graph almost regular}
Let $E\subset \F_q^d$.  There exists $C=C_d>0$ such that if $E>Cq^\frac{d+1}{2}$, then for any $t\in\F_q\setminus \{0\}$ there exists a subset $E'\subset E$ such that $|E'|\approx |E|$, and for every $x\in E'$ we have 
    \[
    \deg(x)\approx \frac{|E|}{q}.
    \]
\end{lem}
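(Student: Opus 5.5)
Applying Lemma \ref{functional IRlemma} with $f=g=E$ (the indicator function), and using $|S_t^{d-1}|\approx q^{d-1}$ for $t\neq 0$, the number of edges satisfies
\[
\sum_{x\in E}\deg(x)=\frac{|S_t^{d-1}|}{q^d}|E|^2+R,\qquad |R|\leq 2q^{\frac{d-1}{2}}|E|.
\]
The main term is $\approx |E|^2/q$. Once $|E|>Cq^{\frac{d+1}{2}}$ with $C$ large, the error term is at most half of it, so the average degree is $\approx |E|/q$.

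We first remove the low-degree vertices. Let $L=\{x\in E:\deg(x)\leq c|E|/q\}$ for a small constant $c$. Apply Lemma \ref{functional IRlemma} to $f=1_L$, $g=1_E$:
\[
\sum_{x\in L}\deg(x)\geq \frac{c'}{q}|L||E|-2q^{\frac{d-1}{2}}|L|^{1/2}|E|^{1/2}.
\]
The left side is at most $c|L||E|/q$. With $c<c'/2$ this gives $|L|^{1/2}\lesssim q^{\frac{d+1}{2}}|E|^{-1/2}$, that is, $|L|\lesssim q^{d+1}/|E|$, which is a small fraction of $|E|$ when $C$ is large. So at least $|E|/2$ vertices have degree $\gtrsim |E|/q$.

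Next we cap the high degrees. Let $H=\{x\in E:\deg(x)\geq K|E|/q\}$. Apply the lemma with $f=1_H$, $g=1_E$:
\[
K|H||E|/q\leq \sum_{x\in H}\deg(x)\lesssim |H||E|/q+q^{\frac{d-1}{2}}|H|^{1/2}|E|^{1/2}.
\]
For $K$ large the first term on the right can be absorbed, which again gives $|H|\lesssim q^{d+1}/|E|\ll |E|$. Alternatively, the trivial bound $\deg(x)\leq |S_t^{d-1}|\lesssim q^{d-1}$ could be used, but that is comparable to $|E|/q$ only when $|E|\approx q^d$, so the Fourier argument is the one needed.

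Set $E''=E\setminus(L\cup H)$. Then $|E''|\geq |E|/4$, and degrees measured in $G_t(E)$ are $\approx |E|/q$ on $E''$.

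The main obstacle is that the conclusion presumably refers to degrees within $G_t(E')$ itself, and deleting vertices lowers degrees. Upper bounds are preserved automatically. For the lower bounds, I would iterate: repeat the low-degree argument inside $E''$ with the threshold $c|E''|/q$. Since $|E''|\approx |E|$ still exceeds $C'q^{\frac{d+1}{2}}$, the same lemma applies. Each step deletes at most $O(q^{d+1}/|E|)$ points. I expect a cleaner route is to note directly that for $x\in E''$, the number of neighbors in $L\cup H$ is small on average, which follows from applying the lemma with $f=1_{E''}$ and $g=1_{L\cup H}$. Then discard the few $x$ with many such neighbors, using Markov's inequality together with the same $L^2$ bound. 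This yields $E'$ with $|E'|\approx|E|$ and $\deg_{G_t(E')}(x)\approx |E|/q$ for all $x\in E'$.
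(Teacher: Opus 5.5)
Your argument up to the construction of $E''$ is the paper's proof. The paper removes $E_{\text{low}}=\{x:\deg(x)<|E|/(2q)\}$ and $E_{\text{high}}=\{x:\deg(x)>2|E|/q\}$. It then applies Lemma \ref{functional IRlemma} with $f$ the indicator of the removed set and $g=E$, and gets $|E_{\text{low}}|,|E_{\text{high}}|\lesssim q^{d+1}/|E|$, exactly as you do with general constants $c,K$. That already proves the lemma, because in the paper $\deg$ always refers to $G_t(E)$. This is also how the lemma is used in Lemma \ref{lem: affine stars}, where one counts stars $(X,y)$ with $X\subset E$. So your ``main obstacle'' paragraph is not needed.

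Note, though, that the final paragraph would not prove the stronger bound in $G_t(E')$ as written.
\begin{itemize}
\item The iteration has no control on the number of rounds, since each round's deletions can push survivors below the threshold again.
\item The single Markov step only moves the problem. The discarded set $D_2$ is bounded only by $O(|L\cup H|/\epsilon)$, no better than the set you started with, and survivors may again have many neighbors in $D_2$. The bounds therefore do not improve from round to round.
\end{itemize}

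If you ever need the stronger version, use a peeling argument.
\begin{itemize}
\item First remove $H$.
\item Then delete vertices of $G_t(E\setminus H)$ one at a time while some vertex has current degree below $\delta=c_0|E|/q$.
\item Charge each edge meeting the deleted set $D$ to its first-deleted endpoint. This gives $\sum_{x\in D}\deg_{G_t(E\setminus H)}(x)\leq 2\delta|D|$.
\item Lemma \ref{functional IRlemma} with $f=1_D$, $g=1_{E\setminus H}$ bounds the same sum below by $\gtrsim |D||E|/q-2q^{\frac{d-1}{2}}|D|^{1/2}|E|^{1/2}$. For $c_0$ small this forces $|D|\lesssim q^{d+1}/|E|$.
\end{itemize}
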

\begin{proof}
Let
\[
E_{\text{low}}=\left\{x\in E:\deg(x)<\frac{|E|}{2q}\right\}
\sep
E_{\text{high}}=\left\{x\in E:\deg(x)>\frac{2|E|}{q}\right\}.
\]
We have
\[
\sum_{x,y\in\F_q^d}E_{\text{low}}(x)E(y)S_t^{d-1}(x-y)
=\sum_{x\in E_{\text{low}}}\deg(x) \leq \frac{|E_{\text{low}}||E|}{2q}.
\]
On the other hand, if $t\neq 0$, Lemma \ref{functional IRlemma} implies
\begin{align*}
\sum_{x,y\in\F_q^d}E_{\text{low}}(x)E(y)S_t^{d-1}(x-y)&=\frac{|S_t^{d-1}|}{q^d}\|E_{\text{low}}\|_{L^1}\|E\|_{L^1}+R_t(E_{\text{low}},R) \\
&=\frac{|S_t^{d-1}|}{q^d}|E_{\text{low}}||E|+R_t(E_{\text{low}},E),
\end{align*}
where
\[
|R_t(E_{\text{low}},E)|\leq 2q^{\frac{d-1}{2}}\|E_{\text{low}}\|_{L^2}\|E\|_{L^2}=2q^\frac{d-1}{2}|E_{\text{low}}|^{1/2}|E|^{1/2}.
\]
Putting this together, we have
\[
\left(\frac{|S_t^{d-1}|}{q^{d-1}}-\frac{1}{2}\right)\frac{|E_{\text{low}}||E|}{q}<2q^{\frac{d-1}{2}}|E_{\text{low}}|^{1/2}|E|^{1/2}.
\]
Using the standard estimate $|S_t^{d-1}|=q^{d-1}(1+o(1))$, 
this implies
\[
|E_{\text{low}}||E|\lesssim q^{d+1}.
\]
The assumption $|E|>q^\frac{d+1}{2}$ then implies
\[
|E_{\text{low}}|\lesssim q^{\frac{d+1}{2}}.
\]
A similar argument shows $E_{\text{high}}\lesssim q^{\frac{d+1}{2}}$ also.  Therefore, if $|E|>Cq^{\frac{d+1}{2}}$ with $C$ sufficiently large, we may take
\[
E'=E\setminus(E_{\text{low}}\cup E_{\text{high}})
\]
to complete the proof.
\end{proof}

Lemma \ref{distance graph almost regular} is our main tool to count $k$-stars.  For technical reasons, we actually need an estimate the number of $k$-stars with the $k$-part affinely independent.  We do this here.

\begin{lem}
\label{lem: affine stars}
Let $d\geq 3$ let $E,E'\subset \F_q^d$, and for $1\leq k\leq d$ and $t\in\F_q\setminus \{0\}$ let $\cA_{t,k}(E,E')$ denote the set of $k$-stars $(X,y)$ for which $X\subset E$, $y\in E'$, and $X$ is affinely independent.  There exists $C>0$ such that if $|E|>Cq^{\max(\frac{d+1}{2},k-2)}$, then there exists $E'\subset E$ such that $|E'|\approx |E|$, and for any $t\in\F_q\setminus\{0\}$,
\[
|\cA_{t,k}(E,E')|\approx \frac{|E|^{k+1}}{q^k}.
\]
\end{lem}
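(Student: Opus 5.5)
Take $E'$ to be the almost regular subset supplied by Lemma \ref{distance graph almost regular}; this is available because $|E|>Cq^{\frac{d+1}{2}}$. Every $y\in E'$ then has $\deg(y)\approx |E|/q$, where the degree is computed in $G_t(E)$ with neighbours ranging over all of $E$. Two bounds are needed.

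The upper bound is immediate. A $k$-star $(X,y)$ with $y\in E'$ has $X\subset N(y)$, so
\[
|\cA_{t,k}(E,E')|\leq \sum_{y\in E'}\binom{\deg(y)}{k}\lesssim |E|\left(\frac{|E|}{q}\right)^k=\frac{|E|^{k+1}}{q^k}.
\]

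For the lower bound, fix $y\in E'$ and count ordered affinely independent $k$-tuples $(x^1,\dots,x^k)$ from $N(y)$, choosing the points greedily. Suppose $x^1,\dots,x^{i}$ are already chosen and affinely independent, with $i\le k-1$. The next point $x^{i+1}$ fails to extend the independent set exactly when it lies in $A\cap(y+S_t^{d-1})$, where $A$ is the affine span of $x^1,\dots,x^i$ and $\dim V(A)=i-1$.

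\begin{itemize}
\item If $V(A)$ is not totally isotropic, Corollary \ref{cor: Sphere intersect A}(a) gives at most $2q^{i-2}$ bad points.
\item If $V(A)$ is totally isotropic, there are two sub-cases. When $2(i-1)=d$, part (b) bounds the count by $q^{i-2}$, since $t\neq0$. When $2(i-1)<d$ the corollary does not apply directly; here use the trivial bound $|A|=q^{i-1}$, or better, enlarge $A$ to a non-isotropic subspace of one more dimension and apply part (a), which gives $2q^{i-1}$. Theorem \ref{thm: totally isotropic properties}(b) guarantees $i-1\le d/2$, so these are the only cases.
\end{itemize}

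In every case the number of bad points is $\lesssim q^{i-1}\le q^{k-2}$. Hence the number of good choices for $x^{i+1}$ is at least
\[
\deg(y)-Cq^{k-2}\gtrsim \frac{|E|}{q}-Cq^{k-2}\gtrsim\frac{|E|}{q},
\]
where the last step needs $|E|\gtrsim q^{k-1}$.

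This is where the main obstacle lies. The hypothesis is only $|E|>Cq^{k-2}$, so the crude bound of $q^{i-1}$ bad points is too lossy at the final step $i=k-1$, and the argument must be sharpened there. The plan is as follows.
\begin{itemize}
\item First, for non-isotropic $V(A)$, use the sharper bound $2q^{i-2}\le 2q^{k-3}$. Against $\deg(y)\gtrsim|E|/q$, this requires only $|E|\gtrsim q^{k-2}$, which matches the hypothesis.
\item Second, handle the totally isotropic configurations separately. Either they are rare, or the point $x^{i+1}$ can be chosen in a different order so that the span becomes non-isotropic (totally isotropic spans are special, since they force $\|x^a-x^b\|=0$ for all pairs). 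If neither works, count them by Lemma \ref{functional IRlemma} and subtract them off. I expect this step to need the most care.
\end{itemize}

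With the bad-point bound in hand, the greedy count gives at least $\gtrsim(|E|/q)^k$ ordered tuples for each $y\in E'$. Dividing by $k!$ and summing over $y\in E'$, using $|E'|\approx|E|$, yields
\[
|\cA_{t,k}(E,E')|\gtrsim |E|\left(\frac{|E|}{q}\right)^k=\frac{|E|^{k+1}}{q^k},
\]
which matches the upper bound and completes the plan.
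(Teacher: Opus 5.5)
Your proposal has a genuine gap: the lower bound is left incomplete. Your greedy count is only justified when $|E|\gtrsim q^{k-1}$. The case that would bring it down to the stated hypothesis ($V(A)$ totally isotropic with $2(i-1)<d$) is left as a speculative plan, and the remedies you list do not work as stated:
\begin{itemize}
\item Reordering the points cannot help, because the affine span of $\{x^1,\dots,x^i\}$ does not depend on the order of choice.
\item ``They are rare'' is asserted without justification.
\item ``Count them by Lemma \ref{functional IRlemma} and subtract them off'' is not specified.
\end{itemize}
So as written the lemma is not proved under $|E|>Cq^{\max(\frac{d+1}{2},k-2)}$.

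The missing idea is one line, and you nearly wrote it when you noted that total isotropy forces $i-1\le d/2$. In the unresolved subcase $2(i-1)<d$ you have $2i\le d+1$, so the trivial bound gives at most $|A|=q^{i-1}\le q^{\frac{d-1}{2}}$ bad points. The other half of the hypothesis, $|E|>Cq^{\frac{d+1}{2}}$, gives $q^{\frac{d-1}{2}}<C^{-1}|E|/q$, which is negligible against $\deg(y)\approx |E|/q$.

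Organize the steps by size of $i$ rather than by isotropy:
\begin{itemize}
\item If $2i\le d+1$, use the trivial bound $q^{i-1}\le q^{\frac{d-1}{2}}$.
\item If $2i\ge d+2$, Corollary \ref{cor: Sphere intersect high dim affine} (with $r=i$, using $t\neq 0$) gives at most $2q^{i-2}\le 2q^{k-3}$ bad points. This is $\ll |E|/q$ because $|E|>Cq^{k-2}$.
\end{itemize}
Then every step of your greedy count has $\gtrsim |E|/q$ good choices, and your argument closes. This is exactly the paper's dichotomy. The paper phrases it as an upper bound on stars whose $k$-part has affine rank $r<k$, using the trivial bound for $r<\frac{d+2}{2}$ and Corollary \ref{cor: Sphere intersect high dim affine} for $r\ge\frac{d+2}{2}$. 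Your greedy lower bound becomes equivalent once that split is made.
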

\begin{proof}
Assume $C$ is at least as large as the constant in Lemma \ref{distance graph almost regular}, and $E'$ be as in that Lemma.  This implies that any $y\in E'$ is the center of $\approx \frac{|E|^{k}}{q^k}$ stars $(X,y)$ with $X\subset E$.  This establishes the theorem for $k=1,2$, since any two points are affinely independent.  To prove the theorem for $k\geq 3$, we must show that the of affinely dependent stars is a small proportion of the total.  We start by establishing some notation.  Given affinely independent points $x_1,\dots,x_r$, let $A_{x_1,\dots,x_r}$ denote the affine span.  For $2\leq r<k$, let $\cA_{t,k}^r(E,E')$ be the set of $k$ stars $(X,y)$ such that $r$ is the maximum number of affinely independent points in $X$.  We have
\begin{equation}
\label{eq: count dep stars}
\begin{split}
|\cA_{t,k}^r(E,E')|&\approx\sum_{y\in E'}\sum_{\substack{x_1,\dots,x_r\in N(y) \\ \text{affinely independent}}}\sum_{x_{r+1},\dots,x_k\in N(y)\cap A_{x_1,\dots,x_r}}1 \\
&=\sum_{y\in E'}\sum_{\substack{x_1,\dots,x_r\in N(y) \\ \text{affinely independent}}} |(S_t^{d-1}+y)\cap A_{x_1,\dots,x_r}|^{k-r}.
\end{split}
\end{equation}
If $r\geq \frac{d+2}{2}$, then for any sphere $y$ we have by Corollary \ref{cor: Sphere intersect high dim affine}
\[
|(S_t^{d-1}+y)\cap A_{x_1,\dots,x_r}|\leq 2q^{r-2}.
\]
Plugging this into (\ref{eq: count dep stars}) yields
\begin{equation}
\label{eq: count dep stars high}
\begin{split}
|\cA_{t,k}^r(E,E')|&\lesssim\sum_{y\in E'}\sum_{\substack{x_1,\dots,x_r\in N(y) \\ \text{affinely independent}}} q^{(r-2)(k-r)} \\[.1in]
&\lesssim \frac{|E|^{r+1}}{q^r}q^{(r-2)(k-r)} \\
&=|E|^{r+1}q^{r(k-r)-(2k-r)},
\end{split}
\end{equation}
provided $r\geq \frac{d+2}{2}$.  The bound in (\ref{eq: count dep stars high}) is dominated by $\frac{|E|^{k+1}}{q^k}$ if $|E|>Cq^{r-1}$ for a sufficiently large constant $C$.  Since $r<k$, this follows if $|E|>Cq^{k-2}$.  On the other hand, if $r< \frac{d+2}{2}$, then for any sphere $S$ we plug the trivial bound $|A_{x_1,\dots,x_r}|\leq q^{r-1}$ into (\ref{eq: count dep stars}) to get
\begin{equation}
\label{eq: count dep stars low}
\begin{split}
|\cA_{t,k}^r(E,E')|&\lesssim\sum_{y\in E'}\sum_{\substack{x_1,\dots,x_r\in N(y) \\ \text{affinely independent}}} q^{(r-1)(k-r)} \\[.1in]
&\lesssim \frac{|E|^{r+1}}{q^r}q^{(r-1)(k-r)} \\
&=|E|^{r+1}q^{(r-1)(k-r)-r}.
\end{split}
\end{equation}
The bound in (\ref{eq: count dep stars low}) is dominated by $\frac{|E|^{k+1}}{q^k}$ whenever $|E|>q^r$.  Since $r<\frac{d+2}{2}$, this follows from the assumption $|E|>Cq^{\frac{d+1}{2}}$.

\end{proof}

\subsection{Proofs of Theorems \ref{thm: final main} and \ref{thm: main general} and Corollaries \ref{cor: main k=d} and \ref{cor: main k=d}}

In order to leverage our estimate for stars into a proof of our main theorems, we need the following definition.

\begin{dfn}
Let $E\subset \F_q^d,t\in\F_q\setminus \{0\}$.  If $(X,y)$ is a $k$-star, we say a set $B\subset X$ is \textbf{$(X,y)$-bad} if $1\leq |B|<k$ and
\[
\neigh(B)\subset N(X\setminus B).
\]
Finally, we say the star $(X,y)$ \textbf{admits a bad set} if there exists $B\subset X$ which is $(X,y)$-bad.
\end{dfn}

We are now ready to prove our most general result.

\begin{proof}[Proof of Theorem \ref{thm: main general}]
It is enough to show that there exists a $k$-star which does not admit a bad set.  To see this, suppose $(X,y)$ is such a star.  We start by defining $y_{[k]}=y$.  For any proper subset $I\subset [k]$, since the set $\{x_i:i\in I\}$ is not bad, there exists a point 
\[
z\in \neigh(\{x_i:i\in I\})\setminus N(\{x_i:i\notin I\}).
\]
In other words, we have $z\sim x_i$ if and only if $i\in I$, hence we may take $y_I=z$.

To show that such a $k$-star exists, let $E'$ be as in Lemma \ref{distance graph almost regular}, and let $E''$ be as in Lemma \ref{lem: affine stars} with respect to this set $E'$.  Let $M$ be the number of stars in $\cA_{t,k}(E',E'')$ which admit a bad set.  We shall prove $M<|\cA_{t,k}(E',E'')|$.  For $B\subset E$ with $0\leq |B|<k$, let $M(B)$ denote the number of $k$-stars $(X,y)\in\cA_{t,k}(E',E'')$ so that $B$ is $(X,y)$-bad.  We first note that $\emp$ is not bad for any $(X,y)\in \cA_{t,k}(E',E'')$.  Indeed, if $\emp$ is bad for $(X,y)$ then $E\subset \neigh(X)$, but by choice of $E'$ we have
\[
|\neigh(X)|\leq |X|\cdot\max_{x\in X}\deg(x)\approx \frac{|E|}{q}.
\]

We therefore have
\begin{equation}
\label{eq: decompose M(B)}
M=\sum_{j=1}^{k-1}\sum_{B\in \binom{E}{j}}M(B)\approx \max_{1\leq j<k}\sum_{B\in \binom{E}{j}}M(B).
\end{equation}

Fix $B\subset E$ with $|B|=j$ and $M(B)\neq 0$.  We consider three cases.

\textbf{Case I:} $\aff(\neigh(B))\geq r$ and $|\neigh(B)|>m$.  We count the number of ways to define a $k$-star $(X,y)$ in which $B$ is bad.  To start, suppose without loss of generality that $B=\{x_1,\dots,x_j\}$.  Next, note that $y$ must be chosen from $\neigh(B)$.  
In order for $B$ to be bad, there must be one vertex from $\{x_{j+1},\dots,x_k\}$ which is in $\neigh(\neigh(B))$.  By Theorem \ref{thm: few spheres intersection} (noting that $r\leq \frac{d+2}{2}$ by assumption), there are $O(q^{d-r})$ choices for this vertex.  The remaining $k-j-1$ vertices may be chosen from any of the neighbors of $y$, which means there are $\approx \frac{|E|^{k-j-1}}{q^{k-j-1}}$ choices.  Putting this together, we have
\[
M(B)\lesssim |\neigh(B)|\cdot q^{d-r}\cdot \frac{|E|^{k-j-1}}{q^{k-j-1}}.
\]

\textbf{Case II:} $1<\aff(\neigh(B))<r$ and $|\neigh(B)|>m$.  In this case, since $\neigh(B)$ contains at least 2 affinely independent points, the same count as in Case I can be repeated with $2$ in place of $r$.  Moreover, since $\neigh(B)$ is contained in an affine subspace of dimension less than $r-1$, we have $|\neigh(B)|\lesssim q^{r-2}$, hence
\[
M(B)\lesssim q^{d+r-4}\cdot \frac{|E|^{k-j-1}}{q^{k-j-1}}.
\]

\textbf{Case III:} $|\neigh(B)|\leq m$.  Note that $\aff(S)=1$ if and only if $|S|=1$, so this includes all cases not covered above.  In this case, $y$ is determined by $B$ (up to $m=O(1)$ choices).  We then have $\frac{|E|}{q}$ choices for each of $x_{j+1},\dots,x_k$, giving
\[
M(B)\approx \frac{|E|^{k-j}}{q^{k-j}}.
\]

To simplify notation, we write $B\in I_j,B\in II_j, B\in III_j$ to say that $|B|=j$ and that $B$ fits the criterion of case $I,I,III$, respectively.  Plugging the bounds from each case into (\ref{eq: decompose M(B)}) recalling assumption (\ref{eq: define alpha}) and using the trivial bound $|E|^j$ for the number of Case III sets gives
\begin{equation}
\label{eq: sum bound 1}
\begin{split}
M&=\max_{1\leq j<k}\bigg(\sum_{B\in I_j}|\neigh(B)|\cdot q^{d-r}\cdot \frac{|E|^{k-j-1}}{q^{k-j-1}}+\sum_{B\in II_j}q^{d+r-4}\cdot \frac{|E|^{k-j-1}}{q^{k-j-1}}+\sum_{B\in III_j}\frac{|E|^{k-j}}{q^{k-j}}\bigg) \\[.2in]
&\lesssim \max_{1\leq j<k}\bigg(|E|^{k-j-1}q^{d-k+j-r+1}\sum_{B\in I_j}|\neigh(B)|\bigg)+\max_{1\leq j<k}\bigg(|E|^{k+\al_j-j-1}q^{d-k+j+
r-3}\bigg)+\frac{|E|^{k}}{q}.
\end{split}
\end{equation}
We also have
\[
\sum_{B\in \binom{E}{j}}|\neigh(B)|=\sum_{B\in\binom{E}{j}}\sum_{y\in \neigh(B)\cap E'}1=\sum_{y\in E'}\sum_{\substack{B\subset N(y) \\ |B|=j}}1=\sum_{y\in E'}\deg^j(y)\approx \frac{|E|^{j+1}}{q^j},
\]
which together with (\ref{eq: sum bound 1}) implies
\begin{equation}
\label{eq: sum bound 2}
M\lesssim |E|^{k}q^{d-k-r+1}+\max_{1\leq j<k}\bigg(|E|^{k+\al_j-j-1}q^{d-k+j+
r-3}\bigg)+\frac{|E|^{k}}{q}.
\end{equation}
By our choice of $s$ in (\ref{eq: define s}), if $|E|>Cq^s$ then each term in (\ref{eq: sum bound 2}) is bounded by $C^{-p}\frac{|E|^{k+1}}{q^k}$, where the exponent $p>0$ depends on $k,r,d,m$, as well as the extreme values of $\al_j$.  Since $\cA_{t,k}(E',E'')\approx \frac{|E|^{k+1}}{q^k}$, we can ensure $M<\cA_{t,k}(E',E'')$ by taking $C$ large enough, depending on these parameters.
\end{proof}


\begin{proof}[Proof of Theorem \ref{thm: final main}]
Let $2\leq r\leq \frac{d+2}{2}$ be an integer parameter to be chosen later.  Note that we may take $\al_j=j$, and (\ref{eq: define alpha}) is automatically satisfied.  With this choice, (\ref{eq: define s}) becomes
\begin{equation}
\label{eq: cor s 1}
s=\max\left(\frac{d+1}{2},d-r+1,\frac{d+k+r-4}{2},k-1\right).
\end{equation}
Since $r\geq 2$ and $k\leq d$, we have
\begin{equation}
\label{eq: 3>4}
\frac{d+k+r-4}{2}\geq \frac{d+k-2}{2}=\frac{d-k}{2}+k-1\geq k-1.
\end{equation}
Additionally, the constraints $r\geq 2$ and $k\geq 3$ give
\begin{equation}
\label{eq: 3>1}
\frac{d+k+r-4}{2}\geq \frac{d+1}{2}.
\end{equation}
Combining (\ref{eq: cor s 1}), (\ref{eq: 3>4}), and (\ref{eq: 3>1}) gives the exponent
\begin{equation}
\label{eq: cor s 2}
s=\max\left(d-r+1,\frac{d+k+r-4}{2}\right).
\end{equation}
It is also easy to check
\begin{equation}
\label{eq: optimize r}
d-r+1\geq \frac{d+k+r-4}{2} \hspace{.2in}\text{if and only if}\hspace{.2in} r\leq \frac{d-k}{3}+2.
\end{equation}
Let $r=\left\lfloor \frac{d-k}{3}\right\rfloor+2$.  Clearly $r\geq 2$, and it is easy to check $r\leq \frac{d+2}{2}$ provided $k\geq 3$.  
Therefore, plugging this choice into (\ref{eq: cor s 2}) and taking (\ref{eq: optimize r}) into account gives the exponent
\[
s=d-\left\lfloor \frac{d-k}{3}\right\rfloor -1,
\]
as claimed.
\end{proof}

\begin{proof}[Proof of Corollary \ref{cor: main k=d}]
The claimed exponent $s=d-1$ follows from taking $k=d$ in Theorem \ref{thm: final main}.
\end{proof}

\begin{proof}[Proof of Corollary \ref{cor: main k=3}]
We apply Theorem \ref{thm: final main} with $k=3$.  The exponent is
\begin{align*}
s&=d-\left\lfloor \frac{d-3}{3}\right\rfloor -1 \\
&=d-\left(\lfloor d/3\rfloor-1\right) -1 \\
&=d-\lfloor d/3\rfloor \\
&=\lceil 2d/3\rceil,
\end{align*}
as claimed.
\end{proof}

\section{Sharpness of Corollary \ref{cor: main k=d}}
The purpose of this section is to show that the exponent $d-1$ in Corollary \ref{cor: main k=d} is sharp.  We note that the VC-dimension of $G_t(\F_q^d)$ is $d+1$, and we do not address the issue of what exponent is needed to ensure $G_t(E)$ has VC-dimension $d+1$.  We also note that we only show sharpness with respect to the strong conclusion required in this paper, where a certain exponent guarantees $G_t(E)$ has the claimed VC-dimension for all nonzero $t$.  If we instead ask for an exponent $s$ such that $|E|>Cq^s$ implies $G_t(E)$ has VC-dimension at least $d$ for some $t$, or a positive proportion of $t$, the question remains open.

\begin{dfn}
For $E\subset \F_q^d$, define the \textbf{orthogonality graph} $\cO(E)$ to be the graph with vertex set $E$, where points $x,y\in E$ are adjacent if and only if $x\cdot y=0$.
\end{dfn}

\begin{lem}
\label{degenerate dot vc}
For any $E\subset \F_q^d\setminus S_0^{d-1}$, the VC-dimension of $\cO(E)$ is at most $d-1$.
\end{lem}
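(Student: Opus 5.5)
We read ``VC-dimension at least $k$'' in the sense actually used throughout the paper (see the footnote after the definition of $S_k$): $\cO(E)$ contains the shattering graph $S_d$ as an induced subgraph, so the vertices $y_I$ are distinct from the shattered points $x_1,\dots,x_d$. With closed neighborhoods and $y_I$ allowed to lie in $X$, the statement fails. For example, for an orthonormal basis $e_1,\dots,e_d$ one may take $y_{[d]}=e_1$, $y_{[d]\setminus\{j\}}=2e_j$, and $y_I=\sum_{i\notin I}a_ie_i$ with $\sum a_i^2\neq 0$ otherwise; this needs $q$ large enough that such $a_i$ exist. So the plan is to prove that $\cO(E)$ contains no induced copy of $S_d$ when $E$ avoids $S_0^{d-1}$.

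Suppose for contradiction that $x_1,\dots,x_d\in E$ and $y_I\in E\setminus X$, $I\subset[d]$, realize an induced $S_d$. For $y\notin X$ we have $N(y)\cap X=X\cap y^\perp$. Since $y\in E$ is not null, $y\neq 0$, so $y^\perp$ is a hyperplane through the origin. Thus every trace $X\cap y_I^\perp=\{x_i:i\in I\}$ is the intersection of $X$ with a hyperplane through $0$. Let $W=\mathrm{span}(x_1,\dots,x_d)$ and $m=\dim W$; we split into the cases $m=d$ and $m\le d-1$.

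\emph{Case $m=d$.} Then $W=\F_q^d$, and $y_{[d]}$ is orthogonal to every $x_i$, hence to all of $\F_q^d$. So $y_{[d]}=0$. But $0\in S_0^{d-1}$, contradicting $E\cap S_0^{d-1}=\emp$.

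\emph{Case $m\le d-1$.} Choose a maximal linearly independent subset $T\subset X$, so $|T|=m<d$, and pick $x\in X\setminus T$; then $x\in\mathrm{span}(T)=W$. Let $I$ be the index set of $T$. Any $y$ with $T\subset y^\perp$ satisfies $W=\mathrm{span}(T)\subset y^\perp$, hence $x\in y^\perp$. Therefore no vertex $y_I\notin X$ can have trace exactly $T$, which is a contradiction. This case uses only linear algebra and no isotropy.

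The only step needing care is making sure the non-null hypothesis is used exactly where needed: it excludes $y_{[d]}=0$ in the full-rank case, and it guarantees $y^\perp$ is a genuine hyperplane, i.e.\ $y\neq0$. We also need the convention $y_I\notin X$; this is the main subtlety, since the orthonormal-basis example shows it cannot be dropped. This is the form used in the later sharpness argument, where one wants to rule out induced $S_d$ in distance graphs built from such sets.
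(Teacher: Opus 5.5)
Your argument is correct and is essentially the paper's proof. The paper splits the same way, into $x_1,\dots,x_d$ linearly independent versus dependent:
\begin{itemize}
\item In the independent case $y_{[d]}$ is forced to be degenerate. You get $y_{[d]}=0$ from non-degeneracy of the dot product; the paper gets $\|y_{[d]}\|=0$ by expanding $y_{[d]}$ in the $x_i$.
\item In the dependent case the trace of a spanning subset cannot be cut out. This is your maximal independent set $T$, and the paper's $x_d\cdot y_{[d-1]}=0$.
\end{itemize}
Your closed-neighborhood counterexample is valid, and the paper's proof tacitly works with the open-neighborhood condition $x_i\cdot y_I=0\iff i\in I$, with $y_I\in E$ arbitrary. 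Your argument never uses $y_I\notin X$: for any $y\in E$ we have $N(y)\cap X=X\cap y^\perp$, because $y\cdot y\neq 0$. So it proves this stronger open-neighborhood form verbatim, and restricting to induced copies of $S_d$ is unnecessary.
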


\begin{proof}
Suppose for contradiction the points $\{x_1,\dots,x_d\}\subset E$ and $\{y_I:I\subset [d]\}\subset E$ are such that $x_i\cdot y_I=0$ if and only if $i\in I$.  There are two cases:

\textbf{Case I:} $\{x_1,\dots,x_d\}$ is linearly independent.  Then, there exist $a_1,\dots,a_d\in\F_q$ such that
\[
y_{[d]}=\sum_{i=1}^d a_ix_i.
\]
Therefore,
\[
\|y_{[d]}\|=\sum_{i=1}^d a_ix_i\cdot y_{[d]}=0.
\]
This contradicts the assumption that $E\cap S_0^{d-1}=\emp$.

\textbf{Case II:} $\{x_1,\dots,x_d\}$ is linearly dependent.  In this case, one of the points is in the span of the others; without loss of generality, suppose there exist $b_1,\dots,b_{d-1}$ such that
\[
x_d=\sum_{i=1}^{d-1} b_ix_i.
\]
It follows that
\[
x_d\cdot y_{[d-1]}=\sum_{i=1}^{d-1}b_ix_i\cdot y_{[d-1]}=0.
\]
This contradicts the assumption that $x_i\cdot y_I=0$ if and only if $i\in I$.
\end{proof}

\begin{thm}
\label{sharpness of VC threshold}
Let $d\geq 2$.  For any $t\in\F_q\setminus\{0\}$ there exists a set $E\subset \F_q$ (depending on $t$) with $|E|\approx q^{d-1}$ such that the VC-dimension of $G_t(E)$ is at most $d-1$.
\end{thm}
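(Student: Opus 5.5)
Our plan is to convert the distance graph on a suitable set into an orthogonality graph and then apply Lemma \ref{degenerate dot vc}. The natural candidate is the sphere of radius $t/2$ intersected with a set avoiding null vectors. For $x,y\in S_{t/2}^{d-1}$ we have
\[
\|x-y\|=\|x\|+\|y\|-2x\cdot y=t-2x\cdot y,
\]
so $\|x-y\|=t$ if and only if $x\cdot y=0$. Thus on $E:=S_{t/2}^{d-1}$ the graph $G_t(E)$ coincides with $\cO(E)$.

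Moreover $S_{t/2}^{d-1}\cap S_0^{d-1}=\emp$ because $t\neq 0$. Lemma \ref{degenerate dot vc} then gives that $\cO(E)$, and hence $G_t(E)$, has VC-dimension at most $d-1$.

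One must check that the neighborhood conventions agree. Definition \ref{def: Graph VC} uses closed neighborhoods $N[x]$. In $G_t(E)$ a point $x$ is not adjacent to itself (since $\|x-x\|=0\neq t$), and in $\cO(E)$ it is not adjacent to itself (since $x\cdot x=t/2\neq 0$). The two graphs therefore have identical adjacency and identical closed neighborhoods, so their VC-dimensions are equal.

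The proof of Lemma \ref{degenerate dot vc} treats open-neighborhood shattering, i.e.\ $x_i\cdot y_I=0$ iff $i\in I$. We plan to verify that the closed-neighborhood version is handled the same way: if some $y_I$ coincides with some $x_i$, the membership $x_i\in N[y_I]$ still forces the relevant incidences. In Case I, $y_{[d]}\in N[x_i]$ for all $i$ means either $y_{[d]}\cdot x_i=0$ or $y_{[d]}=x_i$; the latter gives $x_i\cdot x_i=0$ at most once, so a small adjustment is needed. We expect this bookkeeping to be the only delicate point. If it becomes messy, we will instead note that the lemma as stated is about VC-dimension in the paper's sense and invoke it directly.

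Finally we need $|E|\approx q^{d-1}$. This follows from the standard count $|S_{t/2}^{d-1}|=q^{d-1}(1+o(1))$ for nonzero radius, already used in the proof of Lemma \ref{distance graph almost regular}. For $d=2$ the sphere has $q\pm1\approx q$ points, which is consistent with this count. We read the ``$E\subset\F_q$'' in the statement as $E\subset\F_q^d$.
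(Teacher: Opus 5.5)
Your main line is exactly the paper's proof: take $E=S_{t/2}^{d-1}$, use $\|x-y\|=t-2x\cdot y$ to identify $G_t(E)$ with $\cO(E)$, note $E\cap S_0^{d-1}=\emp$, apply Lemma \ref{degenerate dot vc}, and get $|E|\approx q^{d-1}$ from the sphere count.

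The problem is the point you set aside as bookkeeping. No adjustment of Case I can handle closed neighborhoods, because for this $E$ the conclusion is false under Definition \ref{def: Graph VC} taken literally. The configuration you worried about, $y_{[d]}=x_j$ with $x_j$ orthogonal to all the other $x_i$, is not contradictory. Concretely, let $t=2a^2$ with $a\in\F_q^*$ and $x_i=ae_i$ for $i=1,\dots,d$. These lie on $S_{t/2}^{d-1}$ and are pairwise adjacent. The shattering points are as follows.
\begin{enumerate}[(i)]
\item Take $y_{[d]}=x_1$, since $x_1\in N[x_1]$.
\item Take $y_{[d]\setminus\{j\}}=-x_j$. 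It is orthogonal to every $x_i$ with $i\neq j$, while $-x_j\cdot x_j=-t/2\neq 0$ and $-x_j\neq x_j$.
\item When $|[d]\setminus I|\geq 2$, take $y_I=\sum_{i\notin I}c_ie_i$ with all $c_i\neq 0$ and $\sum_{i\notin I}c_i^2=a^2$ (solvable for $q$ large). It is orthogonal exactly to the $x_i$ with $i\in I$ and coincides with none of them.
\end{enumerate}
Thus $\{x_1,\dots,x_d\}$ is shattered by closed neighborhoods, and the VC-dimension in the sense of Definition \ref{def: Graph VC} is at least $d$.

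For the same reason, your fallback of citing Lemma \ref{degenerate dot vc} ``in the paper's sense'' does not close the gap. That lemma's proof only checks the condition $x_i\cdot y_I=0\iff i\in I$, i.e.\ shattering by open neighborhoods $N(y)$, and the example above shows the lemma fails for closed neighborhoods. What your argument and the paper's actually establish is the bound for the open-neighborhood convention $x_i\sim y_I\iff i\in I$. This is also the convention produced by the construction in the proof of Theorem \ref{thm: main general}. Under it, both cases of Lemma \ref{degenerate dot vc} are complete as written and your proof goes through verbatim. You should state that convention explicitly rather than try to patch Case I.
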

\begin{proof}
Let $E=S_{t/2}^{d-1}$.  For any $x,y\in E$, we have
\[
\|x-y\|=(x-y)\cdot (x-y)=\|x\|-2x\cdot y+\|y\|=t-2x\cdot y.
\]
In particular, $\|x-y\|=t$ if and only if $x\cdot y=0$, hence $G_{t}(E)\simeq \cO(E)$.  Since we have assumed $t\neq 0$, we have $E\cap S_0^{d-1}=\emp$.  The result then follows from Lemma \ref{degenerate dot vc}.   
\end{proof}

\bibliographystyle{plain}
\bibliography{refsVCsupersaturation}

@article {HMS26,
    AUTHOR = {Housholder, Christopher and Mangiapanello, Layna and Senger,
              Steven},
     TITLE = {V{C}-dimension of subsets of {H}amming graphs},
   JOURNAL = {Graphs Combin.},
  FJOURNAL = {Graphs and Combinatorics},
    VOLUME = {42},
      YEAR = {2026},
    NUMBER = {3},
     PAGES = {Paper No. 52, 24},
      ISSN = {0911-0119,1435-5914},
   MRCLASS = {52C45},
  MRNUMBER = {5074523},
       DOI = {10.1007/s00373-026-03046-4},
       URL = {https://doi.org/10.1007/s00373-026-03046-4},
}

@misc{DM25,
      title={VC-dimension of Salem sets over finite fields}, 
      author={Moustapha Diallo and Brian McDonald},
      year={2025},
      eprint={2511.08963},
      archivePrefix={arXiv},
      primaryClass={math.CO},
      url={https://arxiv.org/abs/2511.08963}, 
}

@article{VCdot,
	author = {Iosevich, A. and McDonald, B. and Sun, M.},
	doi = {10.1016/j.disc.2022.113096},
	fjournal = {Discrete Mathematics},
	issn = {0012-365X},
	journal = {Discrete Math.},
	mrclass = {68Q32 (42B10)},
	mrnumber = {4475952},
	number = {1},
	pages = {Paper No. 113096, 9},
	title = {Dot products in {$\Bbb F_q^3$} and the {V}apnik-{C}hervonenkis dimension},
	url = {https://doi.org/10.1016/j.disc.2022.113096},
	volume = {346},
	year = {2023}}

@article {RS26,
    AUTHOR = {Rodgers, Brad and Sahay, Anurag},
     TITLE = {The {VC}-{D}imension of {R}andom {S}ubsets of {F}inite
              {G}roups},
   JOURNAL = {Random Structures Algorithms},
  FJOURNAL = {Random Structures \& Algorithms},
    VOLUME = {69},
      YEAR = {2026},
    NUMBER = {1},
     PAGES = {Paper No. e70089},
      ISSN = {1042-9832,1098-2418},
   MRCLASS = {60C05 (05C25 11B75)},
  MRNUMBER = {5120134},
       DOI = {10.1002/rsa.70089},
       URL = {https://doi.org/10.1002/rsa.70089},
}

@article {MSW25,
    AUTHOR = {McDonald, Brian and Sahay, Anurag and Wyman, Emmett L.},
     TITLE = {The {VC} dimension of quadratic residues in finite fields},
   JOURNAL = {Discrete Math.},
  FJOURNAL = {Discrete Mathematics},
    VOLUME = {348},
      YEAR = {2025},
    NUMBER = {1},
     PAGES = {Paper No. 114192, 13},
      ISSN = {0012-365X,1872-681X},
   MRCLASS = {05C69 (11T24)},
  MRNUMBER = {4787319},
       DOI = {10.1016/j.disc.2024.114192},
       URL = {https://doi.org/10.1016/j.disc.2024.114192},
}

@article {Smalldot,
    AUTHOR = {Ascoli, Ruben and Betti, Livia and Cheigh, Justin and
              Iosevich, Alex and Jeong, Ryan and Liu, Xuyan and McDonald,
              Brian and Milgrim, Wyatt and Miller, Steven J. and Romero
              Acosta, Francisco and Velazquez Innuzzelli, Santiago},
     TITLE = {V{C}-dimension of hyperplanes over finite fields},
   JOURNAL = {Graphs Combin.},
  FJOURNAL = {Graphs and Combinatorics},
    VOLUME = {41},
      YEAR = {2025},
    NUMBER = {2},
     PAGES = {Paper No. 47, 13},
      ISSN = {0911-0119,1435-5914},
   MRCLASS = {12E20 (68Q32)},
  MRNUMBER = {4879034},
MRREVIEWER = {Fengwei\ Li},
       DOI = {10.1007/s00373-025-02909-6},
       URL = {https://doi.org/10.1007/s00373-025-02909-6},
}

@article {Smalldist,
    AUTHOR = {Ascoli, Ruben and Betti, Livia and Cheigh, Justin and
              Iosevich, Alex and Jeong, Ryan and Liu, Xuyan and McDonald,
              Brian and Milgrim, Wyatt and Miller, Steven J. and Romero
              Acosta, Francisco and Velazquez Iannuzzelli, Santiago},
     TITLE = {V{C}-dimension and distance chains in {$\Bbb F^d_q$}},
   JOURNAL = {Korean J. Math.},
  FJOURNAL = {The Korean Journal of Mathematics},
    VOLUME = {32},
      YEAR = {2024},
    NUMBER = {1},
     PAGES = {43--57},
      ISSN = {1976-8605,2288-1433},
   MRCLASS = {68Q32},
  MRNUMBER = {4736053},
MRREVIEWER = {Hans-Ulrich\ Simon},
       DOI = {10.11568/kjm.2024.32.1.43},
       URL = {https://doi.org/10.11568/kjm.2024.32.1.43},
}

@article{BHIPR,
	author = {Bennett, Michael and Hart, Derrick and Iosevich, Alex and Pakianathan, Jonathan and Rudnev, Misha},
	doi = {10.1515/forum-2015-0251},
	fjournal = {Forum Mathematicum},
	issn = {0933-7741},
	journal = {Forum Math.},
	mrclass = {52C10 (42B10)},
	mrnumber = {3592595},
	mrreviewer = {Serge\u{\i} V. Konyagin},
	number = {1},
	pages = {91--110},
	title = {Group actions and geometric combinatorics in {$\Bbb{F}_q^d$}},
	url = {https://doi.org/10.1515/forum-2015-0251},
	volume = {29},
	year = {2017}}

@book{Lang,
	author = {Lang, Serge},
	doi = {10.1007/978-1-4613-0041-0},
	edition = {third},
	isbn = {0-387-95385-X},
	mrclass = {00A05 (15-02)},
	mrnumber = {1878556},
	pages = {xvi+914},
	publisher = {Springer-Verlag, New York},
	series = {Graduate Texts in Mathematics},
	title = {Algebra},
	url = {https://doi.org/10.1007/978-1-4613-0041-0},
	volume = {211},
	year = {2002}}

@Article{FIMW,
  author   = {Fitzpatrick, David and Iosevich, Alex and McDonald, Brian and Wyman, Emmett},
  journal  = {Discrete Comput. Geom.},
  title    = {The {VC}-dimension and point configurations in {$\Bbb F_q^2$}},
  year     = {2024},
  issn     = {0179-5376,1432-0444},
  number   = {4},
  pages    = {1167--1177},
  volume   = {71},
  doi      = {10.1007/s00454-023-00570-5},
  fjournal = {Discrete \& Computational Geometry. An International Journal of Mathematics and Computer Science},
  mrclass  = {68Q32 (52C10)},
  mrnumber = {4742200},
  url      = {https://doi.org/10.1007/s00454-023-00570-5},
}

@Article{BCCHIP16,
  author     = {Bennett, Michael and Chapman, Jeremy and Covert, David and Hart, Derrick and Iosevich, Alex and Pakianathan, Jonathan},
  journal    = {J. Korean Math. Soc.},
  title      = {Long paths in the distance graph over large subsets of vector spaces over finite fields},
  year       = {2016},
  issn       = {0304-9914},
  number     = {1},
  pages      = {115--126},
  volume     = {53},
  doi        = {10.4134/JKMS.2016.53.1.115},
  fjournal   = {Journal of the Korean Mathematical Society},
  mrclass    = {52C10 (05C12 05C90 11T23)},
  mrnumber   = {3450941},
  mrreviewer = {Serge\u{\i} V. Konyagin},
  url        = {https://doi-org.proxy.lib.ohio-state.edu/10.4134/JKMS.2016.53.1.115},
}

@Book{ShalevShwartz2014,
  author    = {Shalev-Shwartz, Shai and Ben-David, Shai},
  publisher = {Cambridge University Press},
  title     = {Understanding Machine Learning - From Theory to Algorithms.},
  year      = {2014},
  isbn      = {978-1-10-705713-5},
  ee        = {http://www.cambridge.org/de/academic/subjects/computer-science/pattern-recognition-and-machine-learning/understanding-machine-learning-theory-algorithms},
  pages     = {I-XVI, 1-397},
}

@Article{IMMM25,
  author        = {Iosevich, Alex and Magyar, Akos and McDonald, Alex and McDonald, Brian},
  journal       = {https://arxiv.org/abs/2510.13984},
  title         = {The {VC}-dimension and point configurations in $\mathbb{R}^d$},
  year          = {2025},
  month         = oct,
  archiveprefix = {arXiv},
  copyright     = {arXiv.org perpetual, non-exclusive license},
  doi           = {10.48550/ARXIV.2510.13984},
  eprint        = {2510.13984},
  primaryclass  = {math.CA},
  publisher     = {arXiv},
}

\end{document}